\documentclass[a4paper,12pt,reqno]{amsart}
\usepackage{a4wide}
\usepackage{amsmath}
\usepackage{amssymb}
\usepackage{amsthm}
\usepackage{latexsym}
\usepackage{graphicx}
\usepackage[english]{babel}
\usepackage{algorithm}
\usepackage{algpseudocode}
\usepackage{algorithmicx}
         
\newtheorem{dfn} [subsection]{Definition}

\newtheorem{exm} [subsection]{Example}

\newtheorem{prop}[subsection]{Proposition}

\newtheorem{teor}[subsection]{Theorem}
\newtheorem{lema}[subsection]{Lemma}
\newtheorem{cor} [subsection]{Corollary}

\def\sdepth{\operatorname{sdepth}}
\def\qdepth{\operatorname{hdepth}}
\def\hdepth{\operatorname{hdepth}}

\def\depth{\operatorname{depth}}

\def\PP{\operatorname{P}}
\def\QQ{\operatorname{Q}}

\def\eq{\operatorname{eq}}

\begin{document}
\selectlanguage{english}
\frenchspacing

\numberwithin{equation}{section}

\title{On the arithmetic Hilbert depth}
\author{Silviu B\u al\u anescu$^1$ and Mircea Cimpoea\c s$^2$}
\date{}

\keywords{Stanley depth, Hilbert depth, Integer sequence, Interval partition}

\subjclass[2020]{05A18, 05A20, 06A07, 13D40}

\footnotetext[1]{ \emph{Silviu B\u al\u anescu}, University Politehnica of Bucharest, Faculty of
Applied Sciences, 
Bucharest, 060042, E-mail: silviu.balanescu@stud.fsa.upb.ro}
\footnotetext[2]{ \emph{Mircea Cimpoea\c s}, University Politehnica of Bucharest, Faculty of
Applied Sciences, 
Bucharest, 060042, Romania and Simion Stoilow Institute of Mathematics, Research unit 5, P.O.Box 1-764,
Bucharest 014700, Romania, E-mail: mircea.cimpoeas@upb.ro,\;mircea.cimpoeas@imar.ro}

\begin{abstract}
Let $h:\mathbb Z \to \mathbb Z_{\geq 0}$ be a nonzero function with $h(k)=0$ for $k\ll 0$. We define the
\emph{Hilbert depth} of $h$ by
$\qdepth(h)=\max\{d\;:\; \sum_{j\leq k} (-1)^{k-j}\binom{d-j}{k-j}h(j)\geq 0\text{ for all }k\leq d\}$.
We show that $\qdepth(h)$ is a natural generalization for the Hilbert depth of a subposet 
$\PP\subset 2^{[n]}$ and we prove some basic properties of it.

Given $h(j)=\begin{cases} aj^n+b,& j\geq 0 \\ 0, & j<0 \end{cases}$, with $a,b,n$ positive integers, we compute $\qdepth(h)$ for $n=1,2$
and we give upper bounds for $\qdepth(h)$ for $n\geq 3$. More generally, if $h(j)=\begin{cases} P(j),& j\geq 0 \\ 0,& j<0 \end{cases}$,
where $P(j)$ is a polynomial of degree $n$, with non-negative integer coefficients, and $P(0)>0$, we show that $\qdepth(h)\leq 2^{n+1}$.
\end{abstract}

\maketitle

\section*{Introduction}

Let $n$ be a positive integer. We denote $[n]=\{1,2,\ldots,n\}$. For $C\subset D\subset [n]$, the interval $[C,D]$ is the family 
$\{A\subset[n]\;:\;C\subset A\subset B\}$. We consider $2^{[n]}$ the poset of subsets of $[n]$ with the inclusion. 
Given a subposet $\PP$ of $2^{[n]}$, a \emph{Stanley decomposition} of $\PP$ is a decomposition 
$$\mathcal P:\;\PP=\bigcup_{i=1}^r [C_i,D_i],$$ into
disjoint intervals. The \emph{Stanley depth} of $\mathcal P$ is 
$$\sdepth(\mathcal P)=\max_{i=1}^r |D_i|$$ and \emph{the Stanley depth} of $\PP$ is
$$\sdepth(\PP)=\max\{\sdepth(\mathcal P)\;:\;\mathcal P\text{ a Stanley decomposition of }\PP\}.$$
A generalization of Stanley depth for arbitrary finite posets can be found in \cite{wang}.

The interest for this invariant came from commutative algebra, where a similar invariant was
associated to a multigraded $S$-module $M$, where $S=K[x_1,\ldots,x_n]$. 
To be more precise, a \emph{Stanley decomposition} of $M$ is a direct sum 
$$\mathcal D: M = \bigoplus_{i=1}^r m_i K[Z_i],$$ 
as a $\mathbb Z^n$-graded $K$-vector space, where $m_i\in M$ is homogeneous with respect to $\mathbb Z^n$-grading, 
$Z_i\subset\{x_1,\ldots,x_n\}$ such that $m_i K[Z_i] = \{um_i:\; u\in K[Z_i] \}\subset M$ is a free $K[Z_i]$-submodule of $M$. 
We define $\sdepth(\mathcal D)=\min_{i=1,\ldots,r} |Z_i|$ and 
$$\sdepth(M)=\max\{\sdepth(\mathcal D)|\;\mathcal D\text{ is a Stanley decomposition of }M\}.$$ 
Stanley \cite{stan} conjectured that $$\sdepth(M)\geq \depth(M),$$ 
a conjecture which was shown to be false in general, see Duval et.al \cite{duval}.

The connection between the combinatorial Stanley depth and the algebraic depth was made by Herzog Vl\u adoiu and Zheng in \cite{hvz}. They proved that 
if $I\subset J\subset S$ are two squarefree monomial ideals, then 
$$\sdepth(J/I)=\sdepth(\PP_{J/I}),\text{ where }\PP_{J/I}=\{A\subset [n]\;:\;\prod\limits_{j\in A}x_j\in J\setminus I\}.$$
Another related invariant, which came from the theory of Hilbert series, is the so called Hilbert depth; see \cite{uli}. 
Given $M$ a finitely generated graded $S$-module, the Hilbert depth of $M$, denoted by $\hdepth(M)$ is the maximal depth of a finitely generated graded $S$-module $N$
with the same Hilbert series as $M$. It is not difficult to see that, given a multigraded $S$-module $M$, then 
$$\hdepth(M)\geq \sdepth(M).$$
In \cite{lucrare2} we presented a new method of computing the Hilbert depth of a quotient of $J/I$ of two squarefree monomial ideals, as follows:

Let $\PP\subset 2^{[n]}$ be a subposet. For any $0\leq j\leq n$, we consider $\alpha_j(\PP)$ to be the number of subsets in $\PP$ with $j$ elements.
For any $0\leq k\leq d$, we define the numbers 
$$\beta_k^d(\PP):=\sum_{j=0}^k (-1)^{k-j} \binom{d-j}{k-j} \alpha_j(\PP).$$
If $\PP$ has a Stanley decomposition $\mathcal P$ with $\sdepth(\mathcal P)=d$, then, we note that $\beta_k^d(\PP)\geq 0$ for all $0\leq k\leq d$.
We defined the \emph{Hilbert depth} of $\PP$ by setting
$$\qdepth(\PP)=\max\{d\;:\;\beta_k^d(\PP)\geq 0\text{ for all }0\leq k\leq d\}.$$
From the aforementioned remark, it follows that $\sdepth(\PP)\leq \qdepth(\PP)$. In particular, if $\PP=\PP_{J/I}$ then, according to \cite[Theorem 2.4]{lucrare2},
we have that $$\hdepth(J/I)=\hdepth(\PP_{J/I}).$$
The aim of our paper is to further generalize the concept of Hilbert depth of a finite subposet of $2^{[n]}$ and to put more light on its combinatorial nature.

We denote $\mathbb Z_{\geq 0}$ the set of non-negative integers and 
$$\mathcal H=\{h:\mathbb Z\to \mathbb Z_{\geq 0}\text{ with }h(k)=0\text{ for }k\ll 0\}.$$
Given $h\in \mathcal H$ we define the \emph{Hilbert depth} of $h$ by
$$\qdepth(h)=\max\{d\;:\; \sum_{j\leq k} (-1)^{k-j}\binom{d-j}{k-j}h(j)\geq 0\text{ for all }k\leq d\}.$$
If $\PP\subset 2^{[n]}$ and $h_{\PP}(j)=\begin{cases} |\PP_j|,&0\leq j\leq n \\ 0,&\text{ otherwise}\end{cases}$ then it is clear that 
$$\qdepth(h_{\PP})=\qdepth(\PP).$$
Now, if $h:\mathbb Z \to \mathbb Z_{\geq 0}$ is a function as above, $k_0=\min\{j\;:\;h(j)>0\}$, $k_1=k_0+1$,
$k_f=\min\{j>k_0\;:\;h(j)=0\}-1$, $h_0=h(k_0)$, $h_1=h(k_1)$ and
$c(h)=\left\lfloor \frac{h_1}{h_0} \right\rfloor$ then, in Proposition \ref{p1} we prove that 
$$k_0\leq \qdepth(h) \leq \min\{k_f,k_0+c(h)\}.$$
In Proposition \ref{p17} we show that if $\qdepth(h)\geq d$ then 
$$h(k)\geq \binom{d-k_0}{k-k_0}h_0\text{ for all }k_0\leq k\leq d.$$
In Proposition \ref{p18} we prove that if 
$$h(k) \geq (d-k+1)h(k-1)\text{ for all }k_0+1\leq k\leq d,$$ 
then $\qdepth(h)\geq d$. 

As an application of the above properties, we show that for a geometric sequence, i.e. $h(j)=\begin{cases} a\cdot r^j, & j\geq 0 \\ 0,& j<0 \end{cases}$, where $a,r$ are positive integers, we have $\qdepth(h)=r$; see Corollary \ref{c19}.

Given $g,h\in \mathcal H$ and $c$ a positive integer, we show that
$$\qdepth(g+h)\geq \min\{\qdepth(g),\qdepth(h)\}\text{ and }\qdepth(ch)=\qdepth(h),$$
see Proposition \ref{p110} and Proposition \ref{p111}.

For $h\in\mathcal H$ and $m$ an integer, we define 
$$h[m](j):=h(m+j)\text{ for all }j\in \mathbb Z,$$ 
the $m$-shift of $h$. Given $h\in \mathcal H$ with $k_f(h)<\infty$, we prove that there exists an integer $m$ and a subposet $\PP\subset 2^{[n]}$
such that $h[m]=h_{\PP}$;
see Theorem \ref{teo1} and Theorem \ref{teo2}. Moreover, we show that we can choose $\PP$ such that $\qdepth(\PP)=\sdepth(\PP)$. 

In Section $3$ we study the Hilbert depth of certain polynomial sequences. As main tools, in order to obtain upper bounds for the Hilbert depth, 
we use Proposition \ref{p1} and, more important, we study the sign of the quadratic function $x\mapsto \beta_2^x(h)$.

In Theorem \ref{teop}, we show that if 
$h(j)=\begin{cases} P(j),& j\geq 0 \\ 0,& j<0\end{cases}$, 
where $P(j)$ is a polynomial of degree $n$, with non-negative integer coefficients, and $P(0)>0$, then 
$$\qdepth(h)\leq 2^{n+1}.$$
We believe that this bound is sharp, but can prove it only for $n=1$ and $n=2$.

We further study the function $h(j)=\begin{cases} aj^n+b, & j\geq 0 \\ 0,& j<0 \end{cases}$, where $a,b,n$ are positive integers.
We note that $\qdepth(h)\leq \left\lfloor \alpha \right\rfloor +1$, where $\alpha=\frac{a}{b}$.

In Proposition \ref{l31} we prove that
$$\qdepth(h)=c(h)=\lfloor \alpha \rfloor + 1\text{ for }\alpha<3\text{ and }\qdepth(h)\geq 3\text{ for }\alpha\geq 3.$$
In Theorem \ref{arit} (case $n=1$) we prove that if $h(j)= \begin{cases} aj+b, & j\geq 0 \\ 0, & j<0 \end{cases}$ then
$$ \qdepth(h) = \begin{cases} \lfloor \alpha \rfloor + 1,& a<3b \\ 4,& 3b\leq a\leq 4b \\ 3,& a>4b \end{cases}.$$
In Theorem \ref{pe2} (case $n=2$) we prove that if $h(j)= \begin{cases} aj^2+b, & j\geq 0 \\ 0, & j<0 \end{cases}$ then
$$\qdepth(h)=\begin{cases} \lfloor \alpha \rfloor + 1,& \alpha \in (0,7) \\
8, & \alpha \in [7,\frac{22}{3}] \\ 
7, & \alpha \in (\frac{22}{3},8] \\ 6, & \alpha \in (8,11] \\ 5, & \alpha \in (11,\infty) \end{cases},\text{ where }\alpha=\frac{a}{b}.$$
Also, we give an upper bounds for $\qdepth(h)$ for $n\geq 3$; see Theorem \ref{teo}. 
However, this limits is not sharp; see Example \ref{sarpe}.

Finally, in Section $4$ we present an algorithm to compute the Hilbert depth for a sequence $h(j)$ which takes positive values
for all $j$ in $\{0,1,\ldots,s\}$ and zero in rest.

\section{Basic properties}

We denote 
$$\mathcal H=\{h:\mathbb Z \to \mathbb Z_{\geq 0}\;:\;h\text{ is nonzero and }h(j)=0\text{ for }j\ll 0\}.$$
Let $k\leq d$ be two integers. We let
\begin{equation}\label{ec1}
\beta_k^d(h):=\sum_{j\leq k} (-1)^{k-j}\binom{d-j}{k-j}h(j)
\end{equation}
From \eqref{ec1} it follows that
\begin{equation}\label{ec2}
h(k)=\sum_{j\leq k}\binom{d-j}{k-j}\beta_j^d(h).
\end{equation}
With the above notations, we define:

\begin{dfn}
Let $h\in\mathcal H$. The \emph{(arithmetic) Hilbert depth} of $h$ is
$$\qdepth(h):=\max\{d\;:\;\beta_k^d(h)\geq 0\text{ for all }k\leq d\}.$$
\end{dfn}

\begin{lema}\label{recur}
For any $1\leq k\leq d$ we have that $\beta_{k}^{d+1}(h)=\beta_k^d(h)-\beta_{k-1}^d(h)$.
\end{lema}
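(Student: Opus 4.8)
The plan is to prove the recurrence $\beta_k^{d+1}(h)=\beta_k^d(h)-\beta_{k-1}^d(h)$ by a direct computation starting from the defining formula \eqref{ec1}, using only the Pascal identity for binomial coefficients. First I would write out the three quantities as sums over $j\le k$: namely
\[
\beta_k^{d+1}(h)=\sum_{j\le k}(-1)^{k-j}\binom{d+1-j}{k-j}h(j),\quad
\beta_k^d(h)=\sum_{j\le k}(-1)^{k-j}\binom{d-j}{k-j}h(j),
\]
and
\[
\beta_{k-1}^d(h)=\sum_{j\le k-1}(-1)^{k-1-j}\binom{d-j}{k-1-j}h(j)=-\sum_{j\le k}(-1)^{k-j}\binom{d-j}{k-1-j}h(j),
\]
where in the last step I extend the range of summation to $j\le k$ at no cost since $\binom{d-k}{-1}=0$, and I pull a factor $-1$ out of $(-1)^{k-1-j}$. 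Then $\beta_k^d(h)-\beta_{k-1}^d(h)$ becomes a single sum $\sum_{j\le k}(-1)^{k-j}\bigl[\binom{d-j}{k-j}+\binom{d-j}{k-1-j}\bigr]h(j)$.

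The key step is then to apply Pascal's rule $\binom{d-j}{k-j}+\binom{d-j}{k-1-j}=\binom{d-j+1}{k-j}$ term by term, which turns the bracket into $\binom{d+1-j}{k-j}$ and hence makes the sum exactly $\beta_k^{d+1}(h)$, completing the argument. I should note that all the sums involved are finite because $h\in\mathcal H$ means $h(j)=0$ for $j\ll 0$, so there are no convergence issues and the rearrangements are legitimate; the hypothesis $1\le k$ is exactly what is needed for $k-1$ to be a meaningful lower index and for the shift-of-summation trick to be valid (for $k=0$ there is no $\beta_{-1}^d$ term).

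I do not anticipate a genuine obstacle here: the only thing to be careful about is the bookkeeping with the sign $(-1)^{k-j}$ versus $(-1)^{k-1-j}$ and the harmless extension of the summation range in $\beta_{k-1}^d$, so that the three sums can be combined over a common index set before invoking Pascal's identity. As an alternative, one could instead verify the dual identity on the $h(k)$ side using \eqref{ec2}, but the direct approach above is shortest and most transparent.
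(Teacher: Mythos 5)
Your proof is correct and follows essentially the same route as the paper: a direct computation from the definition \eqref{ec1} using Pascal's identity, with the only cosmetic difference that you combine $\beta_k^d(h)-\beta_{k-1}^d(h)$ into one sum and identify it as $\beta_k^{d+1}(h)$, whereas the paper expands $\beta_k^{d+1}(h)-\beta_k^d(h)$ and reindexes it as $-\beta_{k-1}^d(h)$. The sign bookkeeping and the harmless extension of the summation range via $\binom{d-k}{-1}=0$ are handled correctly.
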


\begin{proof}
From \eqref{ec1} it follows that
\begin{align*}
& \beta_{k}^{d+1}(h) - \beta_k^d(h) = \sum_{j=0}^k (-1)^{k-j} \left( \binom{d+1-j}{k-j} - \binom{d-j}{k-j} \right) h(j) = \\
& = \sum_{j=0}^k (-1)^{k-j} \binom{d-j}{k-1-j} h(j)= - \sum_{j=0}^{k-1} (-1)^{(k-1)-j}\binom{d-j}{(k-1)-j}h(j)=-\beta_{k-1}^d(h),
\end{align*}
hence we get the required result.
\end{proof}

\begin{prop}\label{p13}
If $d=\qdepth(h)$ then
$$\beta_k^{d'}(h)\geq \beta_k^d(h)\geq 0\text{ for all }0\leq k\leq d' \leq d.$$
\end{prop}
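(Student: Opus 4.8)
The inequality $\beta_k^d(h)\ge 0$ is just the defining property of $d=\qdepth(h)$ — it applies because $k\le d'\le d$ forces $k\le d$ — so the real content is the monotonicity $\beta_k^{d'}(h)\ge\beta_k^d(h)$ for $d'\le d$. The plan is to obtain this by a single induction on the lower index $k$ (running from $k\ll 0$ up to $k=d$), using Lemma \ref{recur} to telescope in the upper index.

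More precisely, I would prove: \emph{for every integer $k\le d$ and every $d'$ with $k\le d'\le d$ one has $\beta_k^{d'}(h)\ge\beta_k^d(h)$}. The base case consists of all $k<k_0$, where $k_0=\min\{j:h(j)>0\}$: then every term of the sum \eqref{ec1} for $\beta_k^{d'}(h)$ carries a factor $h(j)$ with $j\le k<k_0$ and so vanishes, giving $\beta_k^{d'}(h)=0=\beta_k^d(h)$. For the inductive step, fix $k$ and assume the claim for all smaller indices; in particular $\beta_{k-1}^{m}(h)\ge\beta_{k-1}^d(h)\ge 0$ for all $m$ with $k-1\le m\le d$, the last inequality again by $d=\qdepth(h)$. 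By Lemma \ref{recur}, $\beta_k^{m}(h)-\beta_k^{m+1}(h)=\beta_{k-1}^{m}(h)\ge 0$ for each $m$ in the range $d'\le m\le d-1$, so the finite chain $\beta_k^{d'}(h)\ge\beta_k^{d'+1}(h)\ge\cdots\ge\beta_k^{d}(h)$ is non-increasing; comparing its endpoints gives $\beta_k^{d'}(h)\ge\beta_k^d(h)$, completing the induction. Restricting to $0\le k$ is the assertion of the proposition.

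The one technical point is that Lemma \ref{recur} is stated only for $1\le k\le d$, while the induction also passes through indices $k\le 0$ (which happens exactly when $k_0\le 0$). But its proof uses only Pascal's identity $\binom{d+1-j}{k-j}=\binom{d-j}{k-j}+\binom{d-j}{k-1-j}$, which is valid for all integers $j\le k\le d$ since then $d-j\ge 0$; hence the recurrence, and with it the whole argument, holds verbatim for every $k\le d$. Beyond this, I expect no genuine obstacle: the only thing to get right is the choice of induction variable and direction, so that the terms $\beta_{k-1}^{m}(h)$ produced by Lemma \ref{recur} are already known to be non-negative when they are needed. (One may also sidestep the induction entirely: substituting \eqref{ec2} into \eqref{ec1} and simplifying the inner sum by an upper-negation followed by the Chu--Vandermonde identity yields the closed form $\beta_k^{d'}(h)=\sum_{j\le k}\binom{(d-d')+(k-j)-1}{k-j}\,\beta_j^d(h)$, in which, for $d'\le d$, every coefficient is a non-negative integer and the one at $j=k$ equals $1$; non-negativity of all $\beta_j^d(h)$ with $j\le d$ then finishes it.)
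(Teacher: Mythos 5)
Your proof is correct and takes essentially the same route as the paper: both arguments rest on Lemma \ref{recur}, the paper packaging the induction as a minimal-counterexample contradiction (greatest $d'$, then smallest $k$) where you run an explicit induction on $k$ with telescoping in the upper index. Your remark that Lemma \ref{recur} must be extended to all $k\le d$ (via Pascal's identity) when $k_0\le 0$ is a legitimate point that the paper's own proof uses silently.
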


\begin{proof}
From Lemma \ref{recur}, it is enough to show that $\beta_k^{d'}(h)\geq 0$ for all $k\leq d'\leq d$.
Suppose this is not the case. We choose the greatest integer $d'$ with $d'<d$ such that there exists $k'\leq d'$ 
with $\beta_k^{d'}(h)<0$. We can assume also
that $k$ is the smallest integer with this property, that is $\beta_{\ell}^{d'}(h)\geq 0$ for all $\ell<k$. 
From Lemma \ref{recur} it follows that
$$\beta^{d'+1}_k(h) = \beta_k^{d'}(h) - \beta_{k-1}^{d'}(h)<0,$$
a contradiction.
\end{proof}

\begin{cor}\label{c14}
If $d=\qdepth(h)$ and $q>d$ then there exists $k\leq q$ such that $\beta^q_k(h)<0$.
\end{cor}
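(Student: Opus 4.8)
The plan is to read the conclusion off directly from the definition of $\qdepth(h)$ as a maximum. Set $S(h)=\{d'\in\mathbb Z\;:\;\beta_k^{d'}(h)\geq 0\text{ for all }k\leq d'\}$, so that $\qdepth(h)=\max S(h)$. Unwinding the definition, the assertion ``$q\notin S(h)$'' is literally the same as ``there exists $k\leq q$ with $\beta_k^q(h)<0$''. So it suffices to argue by contradiction: if no such $k$ existed, then $\beta_k^q(h)\geq 0$ for every $k\leq q$, i.e.\ $q\in S(h)$, whence $q\leq\max S(h)=d$, contradicting $q>d$.

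The only point that genuinely has to be in place is that $\qdepth(h)$ is a well-defined integer, i.e.\ that $S(h)$ is nonempty and bounded above, for otherwise the ``$\max$'' is meaningless. Non-emptiness is seen by checking that $k_0=\min\{j\;:\;h(j)>0\}$ lies in $S(h)$: for $k<k_0$ all terms $h(j)$ with $j\leq k$ vanish, so $\beta_k^{k_0}(h)=0$, while $\beta_{k_0}^{k_0}(h)=h(k_0)>0$. Boundedness above is exactly the content of the upper bound $\qdepth(h)\leq\min\{k_f,k_0+c(h)\}$ from Proposition \ref{p1}; note $k_0+c(h)<\infty$ since $h_0=h(k_0)$ is a positive integer, so $c(h)=\lfloor h_1/h_0\rfloor$ is finite. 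If one prefers to avoid any apparent circularity, one reads Proposition \ref{p1} as the statement that every element of $S(h)$ is at most $\min\{k_f,k_0+c(h)\}$, which bounds $S(h)$ without reference to its maximum.

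There is no real obstacle here; the corollary is a one-line consequence of the definition. It is worth recording that, combined with Proposition \ref{p13} --- which shows that $S(h)$ contains every integer $\leq\qdepth(h)$ --- one obtains the sharper equivalence $q\in S(h)\iff q\leq\qdepth(h)$, equivalently: to prove $\qdepth(h)\leq q-1$ it is enough to exhibit a single index $k\leq q$ with $\beta_k^q(h)<0$. This contrapositive reformulation is the form in which the corollary will actually be used in the sign analysis of Section $3$.
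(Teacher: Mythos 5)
Your argument is correct and is essentially the paper's: the corollary is the immediate contrapositive of the definition of $\qdepth(h)$ as a maximum (the paper states it without proof, right after Proposition \ref{p13}), and your extra care about $S(h)$ being nonempty and bounded above via Proposition \ref{p1} is sound and non-circular.
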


Let $h\in\mathcal H$. We denote
$$k_0(h)=\min\{j\;:\;h(j)>0\}\text{ and }k_{f}(h)=\min\{j\;:\;j\geq k_0(h)\text{ and }h(j+1)=0\}.$$
Note that $k_0(h)\in\mathbb Z$ and $k_0(h)\leq k_{f}(h)\leq +\infty$.
We also denote $$k_1(h):=k_0(h)+1\text{ and }c(h):=\left\lfloor \frac{h(k_1(h))}{h(k_0(h))} \right\rfloor.$$
With the above notations, we have that:

\begin{prop}\label{p1}
If $k_0:=k_0(h)$ and $k_f:=k_f(h)$ then:
$$k_0 \leq \qdepth(h)\leq \min\{ k_{f} , k_0 +c(h) \}.$$
\end{prop}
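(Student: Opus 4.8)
The statement splits naturally into three inequalities, and I would attack them separately. First, the lower bound $k_0 \le \qdepth(h)$: I need to check that for $d = k_0$, all $\beta_k^d(h) \ge 0$ for $k \le d$. For $k < k_0$ the defining sum $\sum_{j \le k}(-1)^{k-j}\binom{d-j}{k-j}h(j)$ vanishes since $h(j) = 0$ for $j < k_0$; for $k = k_0 = d$ the only surviving term is $j = k_0$, giving $\beta_{k_0}^{k_0}(h) = \binom{0}{0}h(k_0) = h_0 > 0$. So $\qdepth(h) \ge k_0$, which in particular shows the maximum is taken over a nonempty set and $\qdepth(h)$ is well-defined (assuming $k_f < \infty$, or handling the $k_f = \infty$ case by a separate remark on boundedness — but actually one should note $\qdepth(h)$ could a priori be $+\infty$, and the upper bound rules that out only when $c(h)$ or $k_f$ is finite).

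Second, the bound $\qdepth(h) \le k_f$: suppose $d = \qdepth(h)$ and, for contradiction, $d \ge k_f + 1$. Then $h(k_f+1) = 0$ by definition of $k_f$, so in $\beta_{k_f+1}^d(h) = \sum_{j \le k_f+1}(-1)^{(k_f+1)-j}\binom{d-j}{(k_f+1)-j}h(j)$ the $j = k_f+1$ term drops out. I would argue this forces the sum to be $\le 0$, and in fact strictly negative, by isolating a dominant term — the cleanest route is probably to use equation \eqref{ec2}: $0 = h(k_f+1) = \sum_{j \le k_f+1}\binom{d-j}{(k_f+1)-j}\beta_j^d(h)$. If all $\beta_j^d(h) \ge 0$, every summand is $\ge 0$, and the $j = k_0$ summand is $\binom{d-k_0}{k_f+1-k_0}\beta_{k_0}^d(h)$; since $\beta_{k_0}^d(h) = \binom{d-k_0}{0}h_0 = h_0 > 0$ (only the $j=k_0$ term survives in $\beta_{k_0}^d$) and $\binom{d-k_0}{k_f+1-k_0} > 0$ because $d \ge k_f+1 > k_0$, we get $0 > 0$, a contradiction. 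Hence $d \le k_f$.

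Third, the bound $\qdepth(h) \le k_0 + c(h)$: again set $d = \qdepth(h)$ and examine $\beta_{k_1}^d(h)$ where $k_1 = k_0+1$. Since $h$ vanishes below $k_0$, only $j = k_0$ and $j = k_1$ contribute: $\beta_{k_1}^d(h) = h(k_1) - \binom{d-k_0}{1}h(k_0) = h_1 - (d-k_0)h_0$. For this to be $\ge 0$ we need $d - k_0 \le h_1/h_0$, hence $d - k_0 \le \lfloor h_1/h_0 \rfloor = c(h)$, i.e. $d \le k_0 + c(h)$. (If $d \le k_0$ this is trivial since $c(h) \ge 0$.)

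I don't anticipate a genuine obstacle here — all three parts reduce to inspecting $\beta_k^d(h)$ for the specific small values $k \in \{k_0-1\ \text{(vacuous)}, k_0, k_0+1, k_f+1\}$ and using that $h$ is supported on $\{j \ge k_0\}$. The one point requiring a little care is the $k_f = \infty$ case: then the claimed upper bound is $\min\{\infty, k_0+c(h)\} = k_0 + c(h)$, and the argument for the third inequality still applies verbatim, so no separate treatment is needed; I would just remark that $\qdepth(h)$ is finite precisely because of this. The mild subtlety in the second part is making sure the nonnegativity of all $\beta_j^d(h)$ (which holds by definition of $\qdepth$ together with Proposition \ref{p13}) is what drives the contradiction via \eqref{ec2}.
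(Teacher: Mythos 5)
Your proposal is correct and follows essentially the same route as the paper: the lower bound by direct inspection of $\beta_k^{k_0}(h)$, the bound by $k_f$ via the identity \eqref{ec2} applied at $k=k_f+1$ together with $\beta_{k_0}^d(h)=h_0>0$, and the bound by $k_0+c(h)$ from the computation $\beta_{k_1}^d(h)=h_1-(d-k_0)h_0$. The only (harmless) difference is that you phrase the upper bounds as contradictions at $d=\qdepth(h)$, whereas the paper shows directly that the defining condition fails for every larger $d$; Proposition \ref{p13} is not actually needed for your argument.
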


\begin{proof}
We denote $h_0:=h(k_0)$ and $h_1:=h(k_1)$. 
For any integers $k\leq d\leq k_0$, we have that $$\beta^d_k(h)=\begin{cases}h_0 ,&k=d=k_0 \\ 0,&\text{otherwise} \end{cases}.$$
Hence $k_0 \leq \qdepth(h)$. 

In order to prove the other inequality, let $d>k_0+c(h)+1$. Then
$$\beta^d_{k_1}(h) = h_1-\binom{d-k_0}{1}h_0 = h_1 - (c(h)+1)h_0 < h_1 - \frac{h_1}{h_0}h_0 = 0.$$
On the other hand from \eqref{ec2} we have that
$$0=h(k_f+1)=\sum_{j\leq d}\beta^{k_f+1}_j(h).$$
Since $\beta^d_{k_0}(h)=h_0>0$, it follows that there exists 
$k_0<k\leq d$ such that $\beta^d_k(h)<0$.
\end{proof}

\begin{cor}
With the above notations, $\qdepth(h)=k_0$ if and only if $h_0>h_1$.
\end{cor}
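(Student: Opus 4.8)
The plan is to deduce this directly from Proposition \ref{p1} together with one short computation of the numbers $\beta_k^{k_0+1}(h)$.

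First I would handle the implication ``$h_0>h_1\Rightarrow\qdepth(h)=k_0$''. Since $h_1=h(k_1)\geq 0$ and $h_1<h_0$, we have $0\leq h_1/h_0<1$, hence $c(h)=\lfloor h_1/h_0\rfloor=0$; note this also covers the edge case $h_1=0$. Proposition \ref{p1} then yields $\qdepth(h)\leq\min\{k_f,k_0+c(h)\}\leq k_0+c(h)=k_0$, while the lower bound $\qdepth(h)\geq k_0$ from the same proposition forces $\qdepth(h)=k_0$.

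For the converse I would argue the contrapositive: assume $h_0\leq h_1$ and show $\qdepth(h)\geq k_0+1>k_0$. For this it suffices to check that $\beta_k^{k_0+1}(h)\geq 0$ for every $k\leq k_0+1$. When $k<k_0$ each term of $\beta_k^{k_0+1}(h)$ involves $h(j)$ with $j\leq k<k_0$, so $\beta_k^{k_0+1}(h)=0$. When $k=k_0$, the only nonzero contribution is the $j=k_0$ term, giving $\beta_{k_0}^{k_0+1}(h)=\binom{1}{0}h_0=h_0>0$. When $k=k_0+1$, only $j=k_0$ and $j=k_0+1$ contribute, giving $\beta_{k_0+1}^{k_0+1}(h)=-h_0+h_1=h_1-h_0\geq 0$ by hypothesis. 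Hence $d=k_0+1$ satisfies the defining inequalities of $\qdepth$, so $\qdepth(h)\geq k_0+1$, as required.

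There is no real obstacle here; the statement is essentially a restatement of the upper bound in Proposition \ref{p1} in the boundary regime $c(h)\in\{0,1\}$. The only points requiring a little care are the correct evaluation of the binomial coefficients in $\beta_{k_0}^{k_0+1}(h)$ and $\beta_{k_0+1}^{k_0+1}(h)$, and the observation that the vanishing of $h$ below $k_0$ kills all terms with $k<k_0$.
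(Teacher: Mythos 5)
Your proof is correct and follows the route the paper intends: the paper states this as an immediate consequence of Proposition \ref{p1} (with the converse needing only the short verification that $\beta_k^{k_0+1}(h)\geq 0$ for all $k\leq k_0+1$ when $h_1\geq h_0$, exactly as you compute). Your observation that the forward direction is just $c(h)=0$ in the upper bound of Proposition \ref{p1}, and that the converse cannot be read off that bound alone but needs the direct check at $d=k_0+1$, is precisely the right division of labor.
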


The following proposition gives necessary conditions for $\qdepth(h)\geq d$.

\begin{prop}\label{p17}
Let $h\in\mathcal H$ and $d=\qdepth(h)$. Let $k_0=k_0(h)$ and $h_0=h(k_0)$. 
Then, for any $k_0\leq k\leq d$, we have that $$h(k)\geq \binom{d-k_0}{k-k_0}h_0.$$
\end{prop}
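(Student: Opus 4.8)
The plan is to reduce the claimed lower bound on $h(k)$ to the nonnegativity of the numbers $\beta_j^d(h)$, which holds by definition of $d=\qdepth(h)$, combined with the inversion formula \eqref{ec2}. First I would record that, since $h(j)=0$ for $j<k_0$ and $\beta_j^d(h)=0$ for all $j<k_0$ (this follows from \eqref{ec1} because every term in that sum involves some $h(i)$ with $i\le j<k_0$, hence vanishes), formula \eqref{ec2} reads, for $k_0\le k\le d$,
\begin{equation*}
h(k)=\sum_{j=k_0}^{k}\binom{d-j}{k-j}\beta_j^d(h).
\end{equation*}
All the binomial coefficients appearing here are nonnegative, and all the $\beta_j^d(h)$ are nonnegative because $d=\qdepth(h)$; hence each summand is $\ge 0$.

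Next I would simply discard all summands except the one with $j=k_0$, which gives
\begin{equation*}
h(k)\ \ge\ \binom{d-k_0}{k-k_0}\beta_{k_0}^d(h).
\end{equation*}
It then remains to identify $\beta_{k_0}^d(h)$. Evaluating \eqref{ec1} at $k=k_0$ and using again that $h(j)=0$ for $j<k_0$, the only surviving term is $j=k_0$, with coefficient $(-1)^{0}\binom{d-k_0}{0}=1$, so $\beta_{k_0}^d(h)=h(k_0)=h_0$. Substituting this in yields exactly $h(k)\ge \binom{d-k_0}{k-k_0}h_0$, as claimed.

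There is essentially no hard part here: the statement is a direct consequence of the inversion formula and the defining nonnegativity of the $\beta$-numbers, so the only thing to be careful about is the bookkeeping of which $\beta_j^d(h)$ and $h(j)$ vanish for $j<k_0$, and the observation that $k-j\le d-j$ guarantees the binomial coefficients are genuine (nonnegative) binomials rather than zero by convention. One could alternatively note that the inequality for $k=d$ recovers $h(d)\ge h_0$, and for general $k$ it interpolates between $h(k_0)=h_0$ and this, but the one-line argument above via \eqref{ec2} is the cleanest and is what I would present.
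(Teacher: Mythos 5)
Your argument is correct and is exactly the paper's proof, merely written out in full: the paper also deduces the inequality from the inversion formula \eqref{ec2} together with the nonnegativity of all $\beta_j^d(h)$ (by definition of $d=\qdepth(h)$) and the identification $\beta_{k_0}^d(h)=h_0$. Your extra bookkeeping about the vanishing of $h(j)$ and $\beta_j^d(h)$ for $j<k_0$ is just the detail the paper leaves implicit.
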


\begin{proof}
It follows immediately from \eqref{ec2} and the fact that $\beta^d_{k_0}(h)=h_0$.
\end{proof}

The following result gives sufficient conditions for $\qdepth(h)\geq d$.

\begin{prop}\label{p18}
Let $h\in\mathcal H$, $k_0=k_0(h)$, $h_0=h(k_0)$ and $d > h_0$. If 
$$ h(k) \geq (d-k+1)h(k-1)\text{ for all }k_0+1\leq k\leq d,$$
then $\qdepth(h)\geq d$.
\end{prop}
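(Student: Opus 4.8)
The plan is to establish, by induction on the quantity $d-k_0(h)$, the statement of the proposition with the hypothesis $d>h_0$ \emph{removed} (dropping it makes the induction self-contained, and the proposition is the special case where $d>h_0$). Write $k_0:=k_0(h)$ and $h_0:=h(k_0)$. For $k<k_0$ every summand of $\beta^d_k(h)$ is zero, so $\beta^d_k(h)=0$; and if $d\le k_0$ then $\beta^d_d(h)=\beta^{k_0}_{k_0}(h)=h_0\ge 0$, so we are done. Hence we may assume $d>k_0$, and it suffices to prove $\beta^d_k(h)\ge 0$ for $k_0\le k\le d$. The idea is to subtract from $h$ a suitable multiple of the ``interval'' function of top degree $d$ based at $k_0$, obtaining a function whose support starts strictly later.

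Set $e(j):=\binom{d-k_0}{\,j-k_0}$, with the usual convention $\binom{a}{b}=0$ for $b<0$. By \eqref{ec2} the numbers $\beta^d_j(e)$ are the unique coefficients reproducing the values $e(j)$, and $\beta^d_j(e)=\delta_{j,k_0}$ does so; hence $\beta^d_k(e)=\delta_{k,k_0}$. Next, iterating the hypothesis $h(\ell)\ge(d-\ell+1)h(\ell-1)$ from $\ell=j$ down to $\ell=k_0+1$ gives
\[
h(j)\ \ge\ (d-j+1)(d-j+2)\cdots(d-k_0)\,h_0\ =\ \frac{(d-k_0)!}{(d-j)!}\,h_0\ \ge\ \binom{d-k_0}{\,j-k_0}h_0 \qquad (k_0\le j\le d),
\]
while $e(j)=0$ for $j>d$. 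Therefore $\bar h:=h-h_0e$ takes nonnegative integer values, vanishes for $j<k_0$, and satisfies $\bar h(k_0)=0$, so $k_0(\bar h)\ge k_0+1$.

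It remains to check that $\bar h$ satisfies the same (weakened) hypothesis. For $k_0+1\le k\le d$, the identity $\binom{d-k_0}{k-k_0}=\frac{d-k+1}{k-k_0}\binom{d-k_0}{k-1-k_0}$ together with $k-k_0\ge 1$ gives $0\le\binom{d-k_0}{k-k_0}\le(d-k+1)\binom{d-k_0}{k-1-k_0}$, hence
\[
\bar h(k)-(d-k+1)\bar h(k-1)=\bigl[h(k)-(d-k+1)h(k-1)\bigr]-h_0\Bigl[\binom{d-k_0}{k-k_0}-(d-k+1)\binom{d-k_0}{k-1-k_0}\Bigr]\ge 0,
\]
and since $k_0(\bar h)\ge k_0+1$ this in particular holds for $k_0(\bar h)+1\le k\le d$. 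Now if $k_0(\bar h)>d$ (which includes the case $\bar h\equiv 0$, i.e. $h=h_0e$), then $\beta^d_k(\bar h)=0$ for all $k\le d$; otherwise $d-k_0(\bar h)<d-k_0$, so the inductive hypothesis applied to $\bar h$ yields $\beta^d_k(\bar h)\ge 0$ for all $k\le d$. In either case, by linearity $\beta^d_k(h)=\beta^d_k(\bar h)+h_0\,\beta^d_k(e)=\beta^d_k(\bar h)+h_0\delta_{k,k_0}\ge 0$ for every $k\le d$, i.e. $\qdepth(h)\ge d$. The points requiring care are essentially bookkeeping: separating the degenerate cases $d\le k_0$ and $k_0(\bar h)>d$, and noticing that one should prove the statement without the assumption $d>h_0$ so that the induction on $d-k_0(h)$ closes; the binomial inequality driving the inductive step is elementary, so I do not expect a genuine obstacle.
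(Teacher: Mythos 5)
Your argument is correct, but it takes a genuinely different route from the paper. The paper's proof is a two-line direct computation: it writes $\beta_k^d(h)=\sum_{j=k_0}^k(-1)^{k-j}\binom{d-j}{k-j}h(j)$ and groups consecutive terms into pairs $\binom{d-k+2i}{2i}h(k-2i)-\binom{d-k+2i+1}{2i+1}h(k-2i-1)$; each pair is nonnegative because the hypothesis gives $h(k-2i)\ge(d-k+2i+1)h(k-2i-1)$ while $(d-k+2i+1)\binom{d-k+2i}{2i}=(2i+1)\binom{d-k+2i+1}{2i+1}\ge\binom{d-k+2i+1}{2i+1}$, so $\beta_k^d(h)\ge 0$ with no induction. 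You instead peel off $h_0\,e$ with $e(j)=\binom{d-k_0}{j-k_0}$, use that $\beta^d_k(e)=\delta_{k,k_0}$ (correct, by the unitriangular inversion \eqref{ec2} or a Vandermonde-type identity), verify that the remainder $\bar h$ still satisfies the hypothesis with a strictly later starting index, and induct on $d-k_0(h)$. What your approach buys: it makes explicit that the hypothesis $d>h_0$ is superfluous (the paper's proof never uses it either), and it produces a certificate, namely a decomposition of $h$ up to degree $d$ into nonnegative multiples of shifted binomial ``interval'' functions, which is exactly the arithmetic shadow of a Stanley decomposition of depth $d$ in the spirit of Theorem \ref{teo1}(2); the paper's pairing argument is simply shorter. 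One cosmetic slip in your base case: when $d<k_0$ the asserted equality $\beta^d_d(h)=\beta^{k_0}_{k_0}(h)=h_0$ is false (there $\beta^d_d(h)=0$), but this is harmless since every $\beta^d_k(h)$ with $k\le d<k_0$ vanishes, so the conclusion of the base case stands.
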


\begin{proof}
Let $k$ with $k_0\leq k\leq d$. Then, from hypothesis, we have that
\begin{align*}
& \beta_k^d(h)=\sum_{j=k_0}^k (-1)^{k-j}\binom{d-j}{k-j}h(j) = \left( h(k)-\binom{d-k+1}{1}h(k-1) \right) + \\
& + \left( \binom{d-k+2}{2}h(k-2)-\binom{d-k+3}{3}h(k-3) \right) + \cdots \geq 0.
\end{align*}
Hence, $\qdepth(h)\geq d$, as required.
\end{proof}

\begin{cor}\label{c19}
Let $a$ and $r$ be two positive integers. Let $h(j)=\begin{cases} a\cdot r^j,& j\geq 0 \\ 0,& j<0 \end{cases}$ be the
geometric sequence of ratio $r$ and the first term $h(0)=a$. Then $\qdepth(h)=r$.
\end{cor}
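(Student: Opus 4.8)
The plan is to prove $\qdepth(h) = r$ for the geometric sequence $h(j) = a r^j$ (for $j \geq 0$) by establishing both inequalities $\qdepth(h) \geq r$ and $\qdepth(h) \leq r$ using the machinery already developed. First I would record the basic data: $k_0(h) = 0$, $h_0 = h(0) = a$, $h_1 = h(1) = ar$, so $c(h) = \lfloor h_1/h_0 \rfloor = \lfloor ar/a \rfloor = r$, and since $h(j) > 0$ for all $j \geq 0$ we have $k_f(h) = +\infty$. Plugging into Proposition \ref{p1} gives $0 \leq \qdepth(h) \leq \min\{+\infty, 0 + r\} = r$, which immediately yields the upper bound $\qdepth(h) \leq r$.

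For the lower bound $\qdepth(h) \geq r$, the natural tool is Proposition \ref{p18}, which requires checking the hypothesis $h(k) \geq (d - k + 1) h(k-1)$ for all $1 \leq k \leq d$ with $d = r$, plus the side condition $d > h_0$. Here $h(k)/h(k-1) = r$ for every $k \geq 1$, so the inequality to verify is $r \geq r - k + 1$, i.e. $k \geq 1$, which holds for all $k$ in range. So the growth condition is satisfied trivially. The only snag is the hypothesis $d > h_0$, i.e. $r > a$, which need not hold in general (e.g. $a$ large, $r$ small). This is the main obstacle, and I would handle it in one of two ways: either observe that Proposition \ref{p18}'s proof of $\beta_k^d(h) \geq 0$ via the telescoping/grouping argument goes through verbatim without using $d > h_0$ when the ratio condition holds with enough slack (the condition $d > h_0$ is only needed to guarantee the pairing argument terminates correctly in the general statement), or — cleaner — invoke Proposition \ref{p111} (scalar invariance: $\qdepth(ch) = \qdepth(h)$) to reduce to the case $a = 1$, where $h(j) = r^j$ and then $h_0 = 1 < r$ unless $r = 1$; the case $r = 1$ is trivial since then $h$ is the constant sequence $a$ supported on $j \geq 0$ and one checks directly that $\qdepth(h) = 1 = r$ (or again that $k_0 = 0 \leq \qdepth(h) \leq k_0 + c(h) = 1$, with equality by computing $\beta_k^1(h)$).

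Carrying it out in order: (1) compute $k_0, h_0, h_1, c(h), k_f$; (2) apply Proposition \ref{p1} for the upper bound; (3) reduce to $a = 1$ via Proposition \ref{p111}; (4) dispatch $r = 1$ separately by direct computation of the $\beta_k^d$; (5) for $r \geq 2$ apply Proposition \ref{p18} with $d = r$, noting $d = r \geq 2 > 1 = h_0$ and that the ratio condition $r^k \geq (r - k + 1) r^{k-1}$ reduces to $k \geq 1$. Alternatively one can skip the reduction entirely and directly verify $\beta_k^r(h) \geq 0$ by the closed form $\beta_k^r(h) = a \sum_{j=0}^k (-1)^{k-j}\binom{r-j}{k-j} r^j$ and a generating-function or combinatorial identity showing this is nonnegative, but the route through Propositions \ref{p111} and \ref{p18} is shorter and avoids any genuinely new computation. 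The step I expect to require the most care is simply making sure the side hypothesis $d > h_0$ of Proposition \ref{p18} is legitimately met after the reduction, since that is the only place where the argument is not completely automatic.
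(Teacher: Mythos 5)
Your proof is correct and follows essentially the same route as the paper: Proposition \ref{p1} gives $\qdepth(h)\le k_0+c(h)=r$, and Proposition \ref{p18} with $d=r$ gives the lower bound since $h(k)=r\,h(k-1)\ge (r-k+1)h(k-1)$ for $1\le k\le r$. Your extra care with the side hypothesis $d>h_0$ (via the reduction to $a=1$ through Proposition \ref{p111} and the separate case $r=1$) is in fact more scrupulous than the paper, which applies Proposition \ref{p18} without checking that condition; as you observe, the grouping argument in the proof of Proposition \ref{p18} never actually uses $d>h_0$.
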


\begin{proof}
Since $k_0(h)=0$ and $c(h)=\frac{h(1)}{h(0)}=r$, from Proposition \ref{p1} it follows that 
$$\qdepth(h)\leq r.$$
On the other hand, it is clear that
$$h(k) = rh(k-1) \geq (r-k+1)h(k-1)\text{ for all }1\leq k\leq r.$$
Hence, the conclusion follows from Proposition \ref{p18}.
\end{proof}

\begin{prop}\label{p110}
Let $h,g\in\mathcal H$. Then $$\qdepth(h+g)\geq \min\{\qdepth(h),\qdepth(g)\}.$$
\end{prop}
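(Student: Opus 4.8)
The plan is to reduce everything to the defining inequalities $\beta_k^d\geq 0$ and use linearity of the operator $h\mapsto \beta_k^d(h)$ in $h$. First I would set $d_h=\qdepth(h)$, $d_g=\qdepth(g)$, and let $d=\min\{d_h,d_g\}$. The goal is to show $\beta_k^d(h+g)\geq 0$ for all $k\leq d$, which would immediately give $\qdepth(h+g)\geq d$.

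The key observation is that the formula \eqref{ec1} is $\mathbb Z$-linear in the argument: from the definition,
\begin{equation*}
\beta_k^d(h+g)=\sum_{j\leq k}(-1)^{k-j}\binom{d-j}{k-j}\bigl(h(j)+g(j)\bigr)=\beta_k^d(h)+\beta_k^d(g).
\end{equation*}
So it suffices to know that both summands on the right are non-negative for every $k\leq d$. For $h$ this follows from Proposition \ref{p13}: since $d\leq d_h=\qdepth(h)$, we have $\beta_k^d(h)\geq \beta_k^{d_h}(h)\ge 0$ for all $0\leq k\leq d$ (and for $k<0$ or $k=d$ with $k$ below $k_0$ the term is trivially zero, but in any case non-negative). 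Symmetrically $\beta_k^d(g)\geq 0$ for all $k\leq d$. Adding, $\beta_k^d(h+g)\geq 0$ for all $k\leq d$, hence $\qdepth(h+g)\geq d=\min\{\qdepth(h),\qdepth(g)\}$, as claimed. One small point worth stating explicitly: $h+g\in\mathcal H$, since it is a non-negative integer-valued function, it vanishes for $k\ll 0$ (both $h$ and $g$ do), and it is nonzero because, say, $h$ is nonzero and $g$ is non-negative.

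There is essentially no obstacle here: the only thing to be careful about is the range of $k$ in the definition of $\qdepth$ versus the range $0\leq k\leq d$ in Proposition \ref{p13}. I would handle this by noting that for $k<k_0(h+g)$ the value $\beta_k^d(h+g)$ is a sum of a telescoping-type combination that one can check directly, or — more cleanly — simply invoke Proposition \ref{p13} applied to each of $h$ and $g$ separately on the full relevant range and appeal to linearity, so no separate casework is needed. The argument is therefore short; the substance is entirely in having Proposition \ref{p13} available, which guarantees that the sign conditions propagate downward from $d_h$ and $d_g$ to the common value $d$.
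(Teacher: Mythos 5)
Your proof is correct and follows essentially the same route as the paper, whose entire argument is the linearity identity $\beta_k^d(h+g)=\beta_k^d(h)+\beta_k^d(g)$. The only difference is that you make explicit the appeal to Proposition \ref{p13} (non-negativity of $\beta_k^{d}$ propagating down from $\qdepth(h)$ and $\qdepth(g)$ to their minimum), which the paper leaves implicit; this is a reasonable and harmless elaboration.
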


\begin{proof}
The conclusion follows from the fact that $\beta_k^d(h+g)=\beta_k^d(h)+\beta_k^d(g)$ for all $k\leq d$.
\end{proof}

\begin{prop}\label{p111}
Let $h\in\mathcal H$ and $c$ a positive integer. Then $$\qdepth(ch)=\qdepth(h).$$
\end{prop}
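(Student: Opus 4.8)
The plan is to reduce everything to the observation that the operator $h\mapsto\beta_k^d(h)$ is $\mathbb{Z}_{\geq 0}$-linear in $h$. Concretely, for any integers $k\leq d$ and any positive integer $c$, directly from the defining formula \eqref{ec1} one reads off
\[
\beta_k^d(ch)=\sum_{j\leq k}(-1)^{k-j}\binom{d-j}{k-j}\,(ch)(j)=c\sum_{j\leq k}(-1)^{k-j}\binom{d-j}{k-j}h(j)=c\,\beta_k^d(h),
\]
since $(ch)(j)=c\,h(j)$ for all $j$ and the sum is finite (the terms with $j<k_0(h)$ vanish). This is the only computation needed, and it is immediate; there is no real obstacle here.

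From this identity the conclusion follows formally. Because $c>0$, for a fixed $d$ we have $\beta_k^d(ch)\geq 0$ if and only if $\beta_k^d(h)\geq 0$, and this holds simultaneously for all $k\leq d$. Hence the set $\{d\;:\;\beta_k^d(ch)\geq 0\text{ for all }k\leq d\}$ coincides with $\{d\;:\;\beta_k^d(h)\geq 0\text{ for all }k\leq d\}$, and taking the maximum of each side gives $\qdepth(ch)=\qdepth(h)$. (One should also note $ch\in\mathcal H$, i.e. $ch$ is nonzero and vanishes for $j\ll 0$, which is clear since $c\geq 1$ and $h\in\mathcal H$; in particular $k_0(ch)=k_0(h)$, so both maxima are taken over nonempty sets by Proposition \ref{p1}.)
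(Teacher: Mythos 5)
Your proof is correct and follows exactly the paper's argument: the identity $\beta_k^d(ch)=c\,\beta_k^d(h)$ together with $c>0$ makes the defining nonnegativity conditions for $\qdepth(ch)$ and $\qdepth(h)$ equivalent. The extra remarks ($ch\in\mathcal H$, nonemptiness of the sets via Proposition \ref{p1}) are fine but not essential.
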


\begin{proof}
The conclusion follows from the fact that $\beta_k^d(cf)=c\beta_k^d(f)$ for all $k\leq d$.
\end{proof}

Let $h\in\mathcal H$ and $m\in\mathbb Z$. We define the $m$-shift function associated to $h$, by
$$h[m]:\mathbb Z\to\mathbb Z,\;h[m](k):=h(m+k)\text{ for all }k\in\mathbb Z.$$
We have the following:
The following result is straightforward. 

\begin{prop}\label{propo}
With the above notations we have that:
\begin{enumerate}
\item[(1)] $k_0(h[m])=k_0(h)-m$.
\item[(2)] $k_f(h[m])=k_f(h)-m$.
\item[(3)] $c(h[m])=c(h)$.
\item[(4)] $\qdepth(h[m])=\qdepth(h)-m$.
\end{enumerate}
\end{prop}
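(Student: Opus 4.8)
The plan is to prove all four statements by unwinding the definitions; each reduces to a short computation using the binomial identity $\binom{(d-m)-(j-m)}{(k-m)-(j-m)}=\binom{d-j}{k-j}$ after an index shift.

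First I would record the key relation between $\beta_k^d$ of a function and of its $m$-shift. By the very definition in \eqref{ec1}, for integers $k\le d$ one has
\begin{align*}
\beta_{k-m}^{d-m}(h[m]) &= \sum_{i\le k-m}(-1)^{(k-m)-i}\binom{(d-m)-i}{(k-m)-i}h[m](i)\\
&= \sum_{j\le k}(-1)^{k-j}\binom{d-j}{k-j}h(j) = \beta_k^d(h),
\end{align*}
where I substituted $j=m+i$ and used $h[m](i)=h(i+m)=h(j)$. So shifting the argument of $h$ by $m$ just shifts the indices of the $\beta$-table by $m$, with identical values.

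With this in hand, (1) and (2) are immediate: $h[m](j)=h(j+m)>0$ iff $h(j+m)>0$, so $\min\{j:h[m](j)>0\}=\min\{j:h(j+m)>0\}=k_0(h)-m$, and the same bookkeeping gives $k_f(h[m])=k_f(h)-m$. For (3), since $k_1(h[m])=k_0(h[m])+1=k_1(h)-m$, we get $h[m](k_1(h[m]))=h(k_1(h))$ and $h[m](k_0(h[m]))=h(k_0(h))$, hence the ratio and its floor are unchanged, i.e.\ $c(h[m])=c(h)$. For (4): by the boxed relation, $\beta_k^d(h)\ge 0$ for all $k\le d$ holds exactly when $\beta_{k-m}^{d-m}(h[m])\ge 0$ for all $k-m\le d-m$, i.e.\ when $\beta_\ell^{d-m}(h[m])\ge 0$ for all $\ell\le d-m$; taking the maximum over $d$ on the left and over $d-m$ on the right gives $\qdepth(h)=\qdepth(h[m])+m$, which rearranges to the claim.

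There is no real obstacle here — the paper itself calls this "straightforward." The only point requiring minor care is making sure the index substitution in the first display respects the condition $k_0(h)=0$ is \emph{not} assumed, so one must keep the sums running over all $j\le k$ (equivalently all $i\le k-m$) rather than starting at $0$; since $h$ and $h[m]$ both vanish far to the left, these sums are finite and the reindexing is legitimate. I would present the $m$-shift identity for $\beta$ as the single lemma-like computation and then dispatch (1)--(4) in one or two sentences each.
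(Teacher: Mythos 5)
Your proof is correct: the reindexing identity $\beta_{k-m}^{d-m}(h[m])=\beta_k^d(h)$ is verified properly, and parts (1)--(4) follow exactly as you say. The paper omits the proof entirely (declaring the proposition straightforward), and your argument is precisely the intended definition-unwinding, so there is nothing to compare or correct.
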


Let $k\in\mathbb Z$ be an integer. We denote 
\begin{align*}
& \mathcal H_{\geq k}=\{h\in\mathcal H\;:\;k_0(h)\geq k\}.\\
& \mathcal H^f=\{h\in\mathcal H\;:\;k_f(h) < \infty\}.\\
& \mathcal H_{\geq k}^f=\{h\in\mathcal H_{\geq k}\;:\;k_f(h) < \infty\}.
\end{align*}
We have the following:

\begin{cor}
Let $k\in\mathbb Z$ be an integer. If $h\in\mathcal H$ then there exists an integer $m$ such that $h[m]\in\mathcal H_{\geq k}$.
\end{cor}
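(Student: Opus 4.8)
The plan is to deduce this immediately from Proposition \ref{propo}(1), after a short sanity check that shifting keeps us inside $\mathcal H$. First I would observe that for any $m\in\mathbb Z$ the function $h[m]$ still lies in $\mathcal H$: it is nonzero because it takes the same nonzero values as $h$, merely reindexed, and $h[m](j)=h(m+j)=0$ whenever $m+j\ll 0$, i.e. whenever $j\ll 0$. Moreover, since $h\in\mathcal H$ the integer $k_0(h)$ is finite, so $m:=k_0(h)-k$ is a well-defined integer.

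Then I would simply apply part (1) of Proposition \ref{propo}, which gives $k_0(h[m])=k_0(h)-m$. With the above choice $m=k_0(h)-k$ this yields $k_0(h[m])=k_0(h)-(k_0(h)-k)=k\geq k$, so by definition $h[m]\in\mathcal H_{\geq k}$, which is exactly the claim. (In fact any integer $m\leq k_0(h)-k$ works just as well, so the choice is far from unique.)

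I do not expect any genuine obstacle here: the corollary is essentially a normalization statement, recording that up to an $m$-shift one may always assume $k_0\geq k$, and it is a one-line consequence of Proposition \ref{propo}(1). The only points that deserve an explicit word are the verification that $h[m]$ remains nonzero with $h[m](j)=0$ for $j\ll 0$, and the remark that finiteness of $k_0(h)$ is what makes the shift amount meaningful.
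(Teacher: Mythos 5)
Your proposal is correct and follows essentially the same route as the paper: both choose $m\leq k_0(h)-k$ and apply Proposition \ref{propo}(1) to conclude $k_0(h[m])=k_0(h)-m\geq k$. The extra verification that $h[m]$ stays in $\mathcal H$ is a harmless (and reasonable) addition.
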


\begin{proof}
Let $m\leq k_0(h)-k$ be an arbitrary integer. 
According to Proposition \ref{propo} we have $k_0(h[m])=k_0(h)-m\geq k$, thus $h[m]\in\mathcal H_{\geq k}$, as required.
\end{proof}

\begin{exm}\rm
Let $d\geq 1$ be an integer and 
$$h(j)=\begin{cases} \frac{d!}{(d-j)!},& 0\leq j\leq d \\ 0,& \text{ otherwise } \end{cases}.$$
Note that $k_0(h)=0$ and $k_f(h)=d$, therefore $h\in \mathcal H^f_{\geq 0}$. Also, $c(h)=\frac{h(1)}{h(0)}=d$.
From Proposition \ref{p1} it follows that $\qdepth(h)\leq d$. On the other hand, we have that
$$h(k) = (d-k)h(k-1) \text{ for all }1\leq k\leq d,$$
hence, from Proposition \ref{p18} it follows that $\qdepth(h)=d$.

Also, if we choose some arbitrary integer $m$, it is easy to see that
$$h[m](j)=h(m+j)=\begin{cases} \frac{d!}{(d-m-j)!},& -m \leq j\leq d-m \\ 0,& \text{ otherwise } \end{cases}.$$
Moreover, from Proposition \ref{propo} it follows that 
$$k_0(h[m])=-m,\; k_f(h[m])=d-m\text{ and }\qdepth(h[m])=d-m.$$
In particular, we have that $h[m]\in \mathcal H_{\geq -m}^f$.
\end{exm}

\section{Stanley depth and Hilbert depth of subposets of $2^{[n]}$}

First, we recall some notations and definitions from \cite{lucrare2}.

We denote $[n]:=\{1,2,\ldots,n\}$. 

For two subsets $C\subset D\subset [n]$, we denote $[C,D]:=\{A\subset [n]\;:\;C\subset A\subset D\}$,
      and we call it the \emph{interval} bounded by $C$ and $D$.

Let $\PP\subset 2^{[n]}$ be a nonempty family of subsets of $[n]$.

A partition of $P$ is a decomposition $\mathcal P:\;\PP=\bigcup_{i=1}^r [C_i,D_i]$,
      into disjoint intervals.

If $\mathcal P$ is a partition of $\PP$, we let $\sdepth(\mathcal P):=\min_{i=1}^r |D_i|$.

The \emph{Stanley depth} of $\PP$ is 
      $$\sdepth(\PP):=\max\{\sdepth(\mathcal P)\;:\;\mathcal P\text{ is a partition of }\PP\}.$$
For $0\leq k\leq n$, we let $\PP_k=\{A\in\PP\;:\; |A|=k\}$ and $\alpha_k(\PP):=|\PP_k|$.

For all $0\leq d\leq n$ and $0\leq k\leq d$, we consider the integers:
\begin{equation}\label{betak}
\beta_k^d(\PP):=\sum_{j=0}^k (-1)^{k-j} \binom{d-j}{k-j} \alpha_j(\PP).
\end{equation}
The \emph{Hilbert-depth} of $\PP$ is
      $$\qdepth(\PP):=\max\{d\;:\;\beta_k^d(\PP) \geq 0\text{ for all }0\leq k\leq d\}.$$
We associate to $\PP$ the function
$$h_{\PP}:\mathbb Z \to \mathbb \mathbb Z_{\geq 0},\; h_{\PP}(j):=\begin{cases} \alpha_j(\PP),& 0\leq j\leq n \\ 0,&\text{otherwise} \end{cases}.$$
It is clear that $h_{\PP}\in \mathcal H_0$. Moreover, we have the restrictions 
$$0\leq h_{\PP}(j)\leq \binom{n}{j}\text{ for all }j\in [n].$$
Obviously, the definition of the Hilbert depth of $\PP$ given above 
coincide with the definition of the $\qdepth(h_{\PP})$. Hence, we have the following:

\begin{prop}
With the above notations, we have that:
$$\sdepth(\PP) \leq \qdepth(\PP)=\qdepth(h_{\PP}).$$
\end{prop}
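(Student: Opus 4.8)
The plan is to deduce the statement almost entirely from the definitions together with Propositions \ref{p1} and \ref{p110}, the only genuine computation being a single-interval identity.

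First I would verify $\qdepth(\PP)=\qdepth(h_{\PP})$. For all integers $0\leq k\leq d\leq n$ the defining sums \eqref{betak} and \eqref{ec1} agree term by term, since $h_{\PP}(j)=\alpha_j(\PP)$ for $0\leq j\leq n$ and $h_{\PP}(j)=0$ for $j<0$; hence $\beta_k^d(\PP)=\beta_k^d(h_{\PP})$. For $k<0$ one has $\beta_k^d(h_{\PP})=0$, so negative $k$ imposes no condition. Both invariants are bounded above by $n$: $\qdepth(\PP)$ by definition (only $d\leq n$ is considered), and $\qdepth(h_{\PP})$ because $\PP\neq\emptyset$ forces $k_0(h_{\PP})\leq n$, whence $k_f(h_{\PP})\leq n$ and Proposition \ref{p1} gives $\qdepth(h_{\PP})\leq k_f(h_{\PP})\leq n$. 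Thus the two maxima range over the same set of admissible $d$ subject to identical sign conditions, so they coincide.

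Next I would prove $\sdepth(\PP)\leq\qdepth(\PP)$ by additivity. Fix a partition $\mathcal P:\PP=\bigcup_{i=1}^r[C_i,D_i]$ into disjoint intervals with $d:=\sdepth(\mathcal P)=\min_i|D_i|$. Counting subsets by cardinality inside each interval gives $\alpha_j(\PP)=\sum_{i=1}^r\binom{|D_i|-|C_i|}{j-|C_i|}$, that is, $h_{\PP}=\sum_{i=1}^r h_{[C_i,D_i]}$. By Proposition \ref{p110} it then suffices to show $\qdepth(h_{[C_i,D_i]})\geq|D_i|$ for each $i$; granting this, $\qdepth(h_{\PP})\geq\min_i\qdepth(h_{[C_i,D_i]})\geq\min_i|D_i|=d$, and taking the maximum over all partitions $\mathcal P$ together with the first part yields $\qdepth(\PP)=\qdepth(h_{\PP})\geq\sdepth(\PP)$.

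Finally, the single-interval step. Write $c=|C|$ and $D'=|D|-|C|$, so $h_{[C,D]}(j)=\binom{D'}{j-c}$. I would show $\beta_k^{|D|}(h_{[C,D]})=1$ for $k=c$ and $0$ otherwise, all nonnegative, giving $\qdepth(h_{[C,D]})\geq|D|$. The cleanest route is the generating-function form of \eqref{ec2}: if $H(x)=\sum_k h(k)x^k$ then $H(x)=(1+x)^d\sum_j\beta_j^d(h)\,(x/(1+x))^j$; substituting $H(x)=x^c(1+x)^{D'}$ and $d=|D|=c+D'$ and setting $t=x/(1+x)$ collapses the right-hand side to $t^c$, so $\beta_j^{|D|}(h_{[C,D]})=\delta_{jc}$. (Equivalently, one checks the Vandermonde-type identity $\sum_{i\geq 0}(-1)^{k-c-i}\binom{D'-i}{k-c-i}\binom{D'}{i}=\delta_{k,c}$ directly.) This last identity is the only place where something must be computed rather than quoted; I expect no real obstacle, as it is a routine binomial manipulation.
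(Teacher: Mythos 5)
Your proposal is correct, and there is nothing in the paper to diverge from: the paper states this proposition without proof, treating the equality $\qdepth(\PP)=\qdepth(h_{\PP})$ as immediate from the definitions and the inequality $\sdepth(\PP)\leq\qdepth(\PP)$ as the remark recalled in the introduction from \cite{lucrare2}. Your argument is precisely the standard one underlying that remark, organized cleanly: the identity $\alpha_j(\PP)=\sum_i\binom{|D_i|-|C_i|}{j-|C_i|}$, the single-interval computation $\beta_j^{|D|}\bigl(h_{[C,D]}\bigr)=\delta_{j,|C|}$ (which is a correct Vandermonde-type evaluation), and superadditivity via Proposition \ref{p110} (extended to $r$ summands by an obvious induction). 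The only detail worth keeping explicit is the one you already address, namely that $\qdepth(h_{\PP})\leq n$ by Proposition \ref{p1}, so the restriction $0\leq d\leq n$ in the poset definition costs nothing.
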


\begin{teor}\label{teo1}
Let $h\in\mathcal H$. Then:
\begin{enumerate}
\item[(1)] There exist two integers $m$ and $n\geq 1$ and a nonempty family $\PP\subset 2^{[n]}$ such that
$$d:=\qdepth(\PP)=\qdepth(h)-m\text{ and }h_{\PP}(j)=h[m](j)=h(m+j)\text{ for all }1\leq j\leq d.$$
\item[(2)] There exist $N\geq 1$ and $\PP'\subset 2^{[N]}$ such that $h_{\PP}=h_{\PP'}$ and
$$\sdepth(\PP')=\qdepth(\PP')=\qdepth(\PP).$$
\end{enumerate}
\end{teor}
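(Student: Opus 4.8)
The plan is to build the family $\PP$ directly from the Hilbert-depth data of $h$, and then refine it to get the second statement. For part (1), set $d_0 = \qdepth(h)$ and $k_0 = k_0(h)$. The natural first step is to renormalize: choose the shift $m$ so that $h[m]$ has its support starting at $1$ rather than at $k_0$, i.e. $m = k_0 - 1$, so that $k_0(h[m]) = 1$ and, by Proposition \ref{propo}(4), $\qdepth(h[m]) = d_0 - m =: d$. Replacing $h$ by $h[m]$, it then suffices to exhibit, for a function $g \in \mathcal H_{\geq 1}$ with $\qdepth(g) = d$, a value $n$ and a family $\PP \subset 2^{[n]}$ with $\qdepth(\PP) = d$ and $h_\PP(j) = g(j)$ for all $1 \leq j \leq d$. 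The idea is to prescribe the level sizes $\alpha_j(\PP) = g(j)$ for $1 \leq j \leq d$ and to choose $n$ large enough that the binomial ceilings $\binom{n}{j} \geq g(j)$ hold for all these $j$ (possible since $g$ takes only finitely many nonzero values up to level $d$ — here we use that we only need to match levels up to $d$, not the whole tail of $g$); then realize each level by an arbitrary choice of $\alpha_j(\PP)$ subsets of size $j$. One must also control levels $0$ and $> d$: since $g \in \mathcal H_{\geq 1}$ we want $\PP_0 = \emptyset$, and we can take $\PP_j = \emptyset$ for $j > d$ as well. The point then is that $\beta_k^d(\PP)$ depends only on $\alpha_0(\PP), \ldots, \alpha_d(\PP)$, which coincide with $g(0), \ldots, g(d)$; hence $\beta_k^d(\PP) = \beta_k^d(g) \geq 0$ for all $k \leq d$, giving $\qdepth(\PP) \geq d$. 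For the reverse inequality $\qdepth(\PP) \leq d$, invoke Corollary \ref{c14}: since $\qdepth(g) = d$, for $q = d+1$ there is some $k \leq d+1$ with $\beta_k^{d+1}(g) < 0$, and I must argue this same negativity persists for $\PP$ — this requires that the relevant $\beta_k^{d+1}(\PP)$ equals $\beta_k^{d+1}(g)$, which forces a mild constraint on how levels above $d$ are chosen (taking them empty, and checking $\beta_k^{d+1}$ only involves $\alpha_j$ for $j \leq d+1$, with $\alpha_{d+1}(\PP) = 0$; one needs $g(d+1) = 0$ too, or else to absorb the discrepancy — this is the first place where care is needed and where the finiteness hypothesis $k_f(h) < \infty$ from the introduction may implicitly help, though the statement as written does not assume it, so I would double-check whether $\beta_{d+1}^{d+1}(g) = g(d+1) - \binom{0}{0}(\cdots)$ can be made negative using only lower levels).

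For part (2), start from the $\PP \subset 2^{[n]}$ produced in part (1) and enlarge the ambient set to $[N]$ for suitable $N > n$, replacing each subset $A \in \PP$ by itself viewed inside $[N]$ but with enough "room" that a Stanley decomposition achieving depth equal to $\qdepth$ exists. The mechanism I expect to use is the standard one: a subposet all of whose minimal elements can be paired off into intervals $[C_i, D_i]$ with every $|D_i| \geq \qdepth(\PP)$. Concretely, since $\qdepth(\PP) = d$ and the Hilbert-depth inequalities $\beta_k^d(\PP) \geq 0$ hold, one wants to produce an actual interval partition realizing $\sdepth = d$; the obstruction in general is that $\sdepth(\PP) < \qdepth(\PP)$ is possible, so the freedom to choose $\PP'$ (same $h$-vector, larger ambient set) is essential. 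The natural construction: take $N$ large, and for each $j$ with $1 \leq j \leq d$ and each of the $\alpha_j(\PP)$ sets at that level, assign it an interval $[C_i, D_i]$ with $C_i$ of size $j$, $D_i$ of size $\geq d$, chosen so the intervals are pairwise disjoint and their union has the prescribed level sizes. Counting shows this is feasible once $N$ is large enough because each interval $[C_i, D_i]$ with $|C_i| = j$, $|D_i| = d$ contributes $\binom{d-j}{\ell - j}$ sets at level $\ell$, and the $\beta_k^d(\PP) \geq 0$ conditions are exactly the linear-algebra conditions saying the level-size vector $(\alpha_j(\PP))$ lies in the non-negative span of these interval-contribution vectors — this is the content of equation \eqref{ec2} with all $\beta$'s non-negative. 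So $\beta_j^d(\PP) \geq 0$ gives us $\beta_j^d(\PP)$ intervals of the form (bottom of size $j$, top of size $d$), and disjointness is arranged by spreading them out inside the large set $[N]$.

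The main obstacle, and the step I would spend the most effort on, is the disjointness bookkeeping in part (2): one must verify that $\sum_j \beta_j^d(\PP)$ intervals, each needing a distinct bottom set of its prescribed size and a distinct top set of size $d$ containing it, can be packed disjointly inside $2^{[N]}$ while reproducing exactly the level counts $\alpha_\ell = \sum_{j \leq \ell} \binom{d-j}{\ell-j}\beta_j^d$. The cleanest route is probably an explicit greedy/inductive construction on $d$: peel off the intervals with bottom at the lowest level first, using "fresh" coordinates in $[N]$ for their tops, then recurse. I would also need to confirm that no interval with top of size $> d$ is needed (so that $\sdepth(\PP') \leq d$ does not accidentally exceed — wait, that would be fine — rather, I need $\sdepth(\PP') \geq d$, which the construction gives, and $\qdepth(\PP') = d$, which follows since $h_{\PP'} = h_\PP$ and $\qdepth$ depends only on $h$). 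A secondary subtlety is ensuring $h_{\PP'} = h_\PP$ exactly, not just up to level $d$ — but since both are supported in $[1,d]$ (levels above $d$ empty, level $0$ empty) this is automatic once the level counts match. Finally I would state $N$ explicitly, e.g. $N = d \cdot \sum_{j=1}^d \alpha_j(\PP)$ or similar crude bound, large enough to guarantee the packing.
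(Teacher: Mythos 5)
Your construction follows the paper's proof in both parts: for (1) you shift by $m=k_0(h)-1$, realize the truncated level counts $h[m](1),\ldots,h[m](d)$ inside $2^{[n]}$ for $n$ large, with level $0$ and all levels above $d$ empty; for (2) you take $b_j=\beta_j^d(h_{\PP})\geq 0$ disjoint intervals with bottom of size $j$ and top of size $d$, packed on fresh coordinates of a large ground set $[N]$, so that \eqref{ec2} forces $h_{\PP'}=h_{\PP}$, the explicit partition gives $\sdepth(\PP')\geq d$, and $\sdepth(\PP')\leq \qdepth(\PP')=\qdepth(\PP)=d$ closes the argument. This is exactly the paper's route, including the identification of the $\beta_j^d$'s as the interval multiplicities.

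The one step you leave unresolved, the inequality $\qdepth(\PP)\leq d$ in part (1), needs neither Corollary \ref{c14} applied to $g=h[m]$ nor any finiteness assumption on $k_f(h)$. Since you put $\PP_j=\emptyset$ for $j>d$, the function $h_{\PP}$ satisfies $k_0(h_{\PP})=1$ and $h_{\PP}(d+1)=0$, hence $k_f(h_{\PP})\leq d$, and Proposition \ref{p1} immediately gives $\qdepth(\PP)\leq k_f(h_{\PP})\leq d$ (equivalently, Proposition \ref{p17} with $k=d+1$ would force $h_{\PP}(d+1)\geq h_{\PP}(1)>0$ if $\qdepth(\PP)\geq d+1$); this is precisely what the paper means by ``since $\PP_{d+1}=\emptyset$ then $\qdepth(\PP)\leq d$''. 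Your transfer argument could also be repaired rather than abandoned: if $\qdepth(\PP)\geq d+1$ then by Proposition \ref{p13} all $\beta_k^{d+1}(\PP)\geq 0$, while Corollary \ref{c14} gives some $k\leq d+1$ with $\beta_k^{d+1}(g)<0$; for $k\leq d$ one has $\beta_k^{d+1}(\PP)=\beta_k^{d+1}(g)$, and for $k=d+1$ one has $\beta_{d+1}^{d+1}(\PP)=\beta_{d+1}^{d+1}(g)-g(d+1)\leq \beta_{d+1}^{d+1}(g)$, so the discrepancy you worried about only helps. With that step filled in, your proposal and the paper's proof coincide.
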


\begin{proof}
(1) Let $m:=k_0(h)-1$. From Proposition \ref{propo}(1) it follows that $k_0(h[m])=1$, 
that is $h[m](j)=0$ for all $j\leq 0$. Let $d:=\qdepth(h[m])$. We choose $n\geq d+1$ with the property that
$$ h[m](j) = h(m+j) \leq \binom{n}{j} \text{ for all }1\leq j\leq d. $$
For instance, we can choose $n=\max\{h(m+1),h(m+2),\ldots,h(m+d)\}$, but lower values of $n$ could satisfy the above property.

Now, we can chose any subposet $\PP\subset 2^{[n]}$ with the property that $\emptyset\notin \PP$ and 
$$|\PP_k|=\begin{cases} h(m+k),& 1\leq k\leq d \\ 0, & k>d \end{cases}.$$ 
Since $$\beta_k^d(\PP)=\beta_k^d(h[m])\geq 0\text{ for all }k\leq d,$$
it follows that $\qdepth(\PP)\geq d$. On the other hand, since $\PP_{d+1}=\emptyset$ then $\qdepth(\PP)\leq d$.
Thus, the conclusion follows from Proposition \ref{propo}(4).

(2) For $1\leq j \leq d$, we let $b_j:=\beta^d_j(h_{\PP})$. We construct $\PP'$ as follows. Let
$$N:=b_1\cdot d + b_2\cdot (d-1) + \cdots + b_d \cdot 1\text{ and }r:=b_1+b_2+\cdots+b_d.$$
We let $\PP':=[C_1,D_1]\cup [C_2,D_2] \cup \cdots \cup [C_r,D_r]$, where the sets $C_j$ and $D_j$ are defined by:
\begin{align*}
& C_1=\{1\},\;D_1=\{1,2,\ldots,d\},\;C_2=\{d+1\},\;D_2=\{d+1,\ldots,2d\},\ldots,\\
& C_{b_1}=\{(d-1)b_1+1\},\;D_{b_1}=\{(d-1)b_1+1,\ldots,db_1\},\\
& C_{b_1+1}=\{db_1+1,db_1+2\},\;D_{b_1+1}=\{db_1+1,db_1+2,\ldots,db_1+(b_2-1)\},\\
& C_{b_1+2}=\{db_1+b_2,db_1+b_2+1\},\;D_{b_1+2}=\{db_1+b_2,\ldots,db_1+2(b_2-1)\},\ldots,\\
& C_{b_1+b_2}=\{db_1+(d-1)(b_2-1)+1, db_1+(d-1)(b_2-1)+2\},\\
& D_{b_1+b_2}=\{db_1+(d-1)(b_2-1)+1,\ldots,db_1+(d-1)b_2\},\ldots,\\
& \ldots, C_{r-j}=D_{r-j}=\{N-j\}\text{ for }0\leq j\leq b_d-1.
\end{align*}
In general, for $0\leq i\leq r-1$ and $1\leq j\leq b_{i+1}$ we have that
\begin{align*}
& C_{b_1+\cdots+b_i+j}=\{\ell(i,j),\ell(i,j)+1,\cdots,\ell(i,j)+i\}\text{ and }\\ 
& D_{b_1+\cdots+b_i+j}=\{\ell(i,j),\ell(i,j)+1,\cdots,\ell(i,j)+d-1\},\\
& \text{ where }\ell(i,j)=b_1\cdot d+b_2\cdot (d-1)+ \cdots + b_i\cdot (d-i) + (j-1)\cdot (d-i-1) + 1.
\end{align*}
From construction, $\PP'$ satisfy the required conditions.
\end{proof}

Note that, since $\qdepth(h)$ is bounded by $c(h)+k_0$, 
only a finite number of values of $h$ determines the invariant. 
Hence, according to the previous result, we can reduce the study of Hilbert depth of functions 
to the study of the Hilbert depth of a subposets of $2^{[n]}$.

\begin{teor}\label{teo2}
Let $h\in\mathcal H^f$. Then, there exist $m\in\mathbb Z$, $n\geq 1$ and $\PP\subset 2^{[n]}$ such that $h_{\PP}=h[m]$.
Moreover, we can choose $N\geq 1$ and $\PP'\subset 2^{[N]}$ such that $h_{\PP'}=h_{\PP}$ and 
$$\sdepth(\PP')=\qdepth(\PP')=\qdepth(\PP).$$
\end{teor}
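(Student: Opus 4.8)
The plan is to rerun the argument of Theorem \ref{teo1}, the only new feature being that the hypothesis $h\in\mathcal H^f$ makes the relevant shift of $h$ finitely supported, so that the equality $h_{\PP}=h[m]$ can be demanded in all degrees rather than only in degrees $\le d$.

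For the first assertion I would set $m:=k_0(h)-1$, exactly as in the proof of Theorem \ref{teo1}(1). By Proposition \ref{propo} we then have $k_0(h[m])=1$, and, since $h\in\mathcal H^f$, the function $h[m]$ has finite support, say contained in $\{1,2,\dots,s\}$ with $s:=k_f(h)-m$. Put $d:=\qdepth(h[m])$; Proposition \ref{p1} gives $1\le d\le s$. Choose $n\ge s$ large enough that $h[m](j)\le\binom{n}{j}$ for all $1\le j\le s$ (e.g. $n=\max\{s,h(m+1),\dots,h(m+s)\}$), and take any $\PP\subset 2^{[n]}$ with $\emptyset\notin\PP$, with $|\PP_j|=h[m](j)$ for $1\le j\le s$ and with no member of cardinality $>s$. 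Checking the definition of $h_{\PP}$ degree by degree gives $h_{\PP}=h[m]$ as functions on $\mathbb Z$, and therefore $\qdepth(\PP)=\qdepth(h_{\PP})=\qdepth(h[m])=\qdepth(h)-m$ by Proposition \ref{propo}(4).

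For the second assertion, note first that any $\PP'$ with $h_{\PP'}=h_{\PP}$ automatically satisfies $\qdepth(\PP')=\qdepth(h_{\PP'})=\qdepth(h_{\PP})=\qdepth(\PP)=:d$, so, since $\sdepth\le\qdepth$ always holds, it is enough to realize $h_{\PP}$ by a subposet $\PP'\subset 2^{[N]}$ admitting an interval partition all of whose top sets $D_i$ have cardinality $\ge d$, with the value $d$ attained. Let $k_0:=k_0(h_{\PP})\ge 1$ and $b_j:=\beta_j^d(h_{\PP})$ for $1\le j\le d$; these are non-negative integers (because $\qdepth(h_{\PP})=d$), $b_j=0$ for $j<k_0$, and from \eqref{ec2} together with $h_{\PP}(0)=0$ one gets $h_{\PP}(k)=\sum_{j=1}^{d}\binom{d-j}{k-j}b_j$ for all $k\le d$, the same sum being $0$ for $k>d$. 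As in the proof of Theorem \ref{teo1}(2), pack into an initial segment $\{1,\dots,N_0\}$ of the ground set, for each $j$, exactly $b_j$ pairwise disjoint intervals $[C,D]$ with $|C|=j$ and $|D|=d$; the number of $k$-subsets they contribute is precisely $\sum_{j}b_j\binom{d-j}{k-j}$, i.e. they reproduce $h_{\PP}$ in degrees $\le d$ and contribute nothing in higher degrees. It then remains to correct degrees $d<k\le s$, where $h_{\PP}$ may still be positive: I would adjoin, on a fresh block of ground-set elements disjoint from $\{1,\dots,N_0\}$, for each such $k$ a family of $h_{\PP}(k)$ pairwise distinct $k$-subsets, each viewed as the one-point interval $[A,A]$ of top size $|A|=k>d$; being one-point subsets of fresh elements they are pairwise disjoint and disjoint from the intervals of the first block, and they add exactly $h_{\PP}(k)$ in degree $k$ and nothing elsewhere. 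Choosing $N$ large enough to accommodate all of these subsets, the resulting $\PP'\subset 2^{[N]}$ has $h_{\PP'}=h_{\PP}$, and the exhibited partition has $\min_i|D_i|=d$, the value $d$ being attained because $k_0=k_0(h_{\PP})\le d$ by Proposition \ref{p1}, whence $b_{k_0}=h_{\PP}(k_0)>0$ and at least one interval of the first block has top size $d$. Thus $\sdepth(\PP')\ge d$, and with $\sdepth(\PP')\le\qdepth(\PP')=d$ we conclude $\sdepth(\PP')=\qdepth(\PP')=\qdepth(\PP)$.

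The genuinely new point, and the only place $h\in\mathcal H^f$ is used, is that $h[m]$ now has finite support, so we may insist on $h_{\PP}=h[m]$ on the nose; the price is the possibly nonzero tail of $h_{\PP}$ in degrees above $d=\qdepth$, and the main thing to get right is that this tail can be absorbed by high-dimensional one-point intervals. These cannot lower the Stanley depth below $d$, and they do not affect the Hilbert depth, since $\qdepth(\PP')$ is governed solely by the sequence $h_{\PP'}=h_{\PP}$. The remaining verifications (the choice of $n$, the disjoint packing of the $b_j$ intervals as in Theorem \ref{teo1}(2), and that the fresh subsets fit inside $[N]$) are routine.
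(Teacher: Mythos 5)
Your proof is correct and follows essentially the same route as the paper: shift by $m=k_0(h)-1$, realize $h[m]$ on a sufficiently large ground set, then build $\PP'$ from packed intervals with top sets of size $d$ (reproducing $h_{\PP}$ in degrees $\le d$, as in Theorem \ref{teo1}(2)) together with singleton intervals $[A,A]$ absorbing the tail in degrees $>d$, and conclude via $\sdepth(\PP')\le\qdepth(\PP')=d$. One small nitpick: your parenthetical choice $n=\max\{s,h(m+1),\dots,h(m+s)\}$ can fail at $j=s$ when the maximum equals $s$ (since $\binom{s}{s}=1$), so one should take $n\ge s+1$ as in the paper's choice $n\ge k_f(h)-k_0(h)+2$; your stated requirement $h[m](j)\le\binom{n}{j}$ is of course the operative condition and is easily met.
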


\begin{proof}
The proof is similar to the proof of Theorem \ref{teo1} so we give just a sketch of it.
We choose $$n:=\max\{k_f(h)-k_0(h)+2,\; h(j)\;:\;j\in\mathbb Z\}\text{ and }m:=k_0(h)-1.$$
It follows that $h[m](j)=0\text{ for }j\leq 0\text{ and }j\geq n.$ 
Also, from our choice, we have $$h[m](j)\leq n \leq \binom{n}{j}\text{ for }1\leq j\leq n-1.$$
Hence, we can find a subposet $\PP\subset 2^{[n]}$ with $h_{\PP}=h[m]$, as required.

Similar to the proof of Theorem \ref{teo1}(2), we let $b_j=\beta^d_j(h_{\PP})$ for $1\leq j\leq d=\qdepth(h[m])$. 
We let $$N:=\max\{n,b_1\cdot d + b_2\cdot (d-1) + \cdots + b_d \cdot 1,\;h[m](j)\;:\;d+1\leq j\leq n-1\}.$$
We define $\PP'\subset 2^{[N]}$ by $\PP':=\bigcup_{i=1}^r [C_i,D_i] \cup \QQ$, where $C_i$ and $D_i$ are defined as in the
proof of Theorem \ref{teo1}(2) and $\QQ\subset 2^{[N]}$ such that $h_{\QQ}(j)=\begin{cases} h[m](j),&d+1\leq j\leq N-1 \\ 0,&\text{otherwise} \\ \end{cases}$.
From definition, we have that $h_{\PP'}=h_{\PP}=h[m]$, hence $\qdepth(\PP')=\qdepth(\PP)$.
We consider the decomposition $$\mathcal P':\PP'=\bigcup_{i=1}^r [C_i,D_i] \cup  \bigcup_{D\in\QQ}[D,D].$$ Since $|D_i|=d$ for all $1\leq i\leq r$
and $|D|\geq d+1$ for all $D\in\QQ$, it follows that $\sdepth(\mathcal P')=d$ and thus $\sdepth(\PP')\geq d$. On the other hand, we have that
$$\sdepth(\PP')\leq \qdepth(\PP')=\qdepth(\PP)=d.$$
Thus, we get the required conclusion.
\end{proof}

\begin{exm}\rm
Let $h:\mathbb Z\to\mathbb Z_{\geq 0}$, $h(-2)=2$, $h(-1)=4$, $h(0)=7$, $h(1)=3$, $h(2)=1$ and $h(j)=0$ for $j\leq -3$ and $j\geq 3$.
Obviously, we have $k_0=k_0(h)=-2$, $k_f=k_f(h)=2$, $c=c(h)=\frac{4}{2}=2$ and $m=k_0-1=-3$. Let $g:=h[-3]$.

From Proposition \ref{p1} it follows that $\qdepth(h)\leq 0$ and thus $d:=\qdepth(g)\leq 3$. On the other hand, we have
$$\beta^3_1(g)=g(1)=2,\;\beta^3_2(g)=g(2)-\binom{2}{1}g(1)=0\text{ and }\beta^3_3(g)=g(3)-g(2)+g(1)=7-4+2=5,$$
thus $d=\qdepth(g)=3$. Let $n=\max\{k_f-k_0+2,\;h(j)\;-2\leq j\leq 2\}=\max\{6,7\}=7$. We can chose $\PP\subset 2^{[7]}$ as follows:
\begin{align*}
& \PP=\{ \{1\},\;\{2\},\;\{1,2\},\;\{1,3\},\;\{2,3\},\;\{1,4\},\;\{1,2,3\},\;\{1,2,4\},\;\{1,2,5\},\;\{1,3,4\},\;\{1,3,5\},\; \\
& , \{1,4,5\},\;\{2,4,5\},\;\{1,2,3,4\},\;\{1,2,3,5\},\;\{1,3,4,5\},\;\{1,2,3,4,5\}\}.
\end{align*}
It is easy to check that $g=h_{\PP}$. Moreover, since the intervals $[\{1\},\{1,3,4\}]$ and $[\{2\},\{2,3,4\}]$ are disjoint and cover
all the sets with $1$ and $2$ elements of $\PP$, it is easy to construct a Stanley decomposition 
$\mathcal P$ of $\PP$ with $\sdepth(\mathcal P)=3$.
Therefore, $\sdepth(\PP)=\qdepth(\PP)=3$. Of course, for a different choice of $\PP$, one could have $\sdepth(\PP)<\qdepth(\PP)$.
\end{exm}

\section{Hilbert depth of polynomial sequences}

First, we prove the following result:

\begin{teor}\label{teop}
Let $n\geq 1$ be an integer. 
For any polynomial $P\in\mathbb Z_{\geq 0}[X]$ of degree $n$ with $P(0)>0$,
if $h(j)=\begin{cases} P(j),& j\geq 0 \\ 0,& j<0 \end{cases}$, then $$\qdepth(h)\leq 2^{n+1}.$$
\end{teor}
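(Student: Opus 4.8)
The plan is to bound $\qdepth(h)$ by exhibiting, for $d = 2^{n+1}+1$, some index $k \le d$ with $\beta_k^d(h) < 0$; by Corollary \ref{c14} this forces $\qdepth(h) \le 2^{n+1}$. Since $P$ has non-negative integer coefficients and $P(0) > 0$, we have $h(0) = P(0) \ge 1$, and $h(j) = P(j)$ grows polynomially of degree exactly $n$. The key observation is that the $\beta$-numbers $\beta_k^d(h)$ are (up to sign) finite differences of the sequence $h(j)$, weighted by binomial coefficients $\binom{d-j}{k-j}$; intuitively, for $k$ in a suitable range, $\beta_k^d(h)$ behaves like an iterated finite difference of a degree-$n$ polynomial against a competing binomial factor that has ``room'' up to $d$, and once $k$ exceeds roughly $n + \log_2$(something) the polynomial growth can no longer keep the alternating sum non-negative when $d$ is as large as $2^{n+1}+1$.

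I would first set up the generating-function / finite-difference dictionary. Writing $h(k) = \sum_{j \le k} \binom{d-j}{k-j}\beta_j^d(h)$ from \eqref{ec2}, and noting $h$ is supported on $j \ge 0$, the truncated binomial transform is invertible; the cleanest route is to use Lemma \ref{recur} repeatedly to reduce to the value $d = n+1$ or so for the ``base polynomial'' and then track how $\beta_k^d$ changes as $d$ increases from $n+1$ to $2^{n+1}+1$. More precisely, $\beta_k^{d+1}(h) = \beta_k^d(h) - \beta_{k-1}^d(h)$, so iterating from a fixed small $d_0$ up to $d$ expresses $\beta_k^d(h)$ as an alternating sum $\sum_i (-1)^i \binom{d-d_0}{i}\beta_{k-i}^{d_0}(h)$. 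Choosing $d_0 = n+1$, the numbers $\beta_k^{n+1}(h)$ for $k \le n+1$ are the ``finite differences at scale $n+1$'' of the polynomial part, and only finitely many are nonzero in a controlled way; in particular $\beta_{n+1}^{n+1}(h) = \Delta^{n+1}$-type quantity is essentially the leading coefficient times $(n+1)!$ up to lower-order corrections, and is positive, while the combinatorial factors $\binom{d-d_0}{i}$ blow up. The goal is to pick $k$ near $2^n$ and show the negative contributions dominate once $d - d_0 \ge 2^{n+1} - n$.

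The main obstacle — and where I expect the real work to be — is pinning down the precise threshold $2^{n+1}$ rather than some weaker bound like $c(n)\cdot n$ or an exponential with a different base. The factor $2^{n+1}$ strongly suggests an inductive argument on $n$: if $P$ has degree $n$ and $P(0) > 0$, one can try to write $P(j) = P(0) + j\cdot Q(j)$ with $\deg Q = n-1$ but $Q$ possibly having $Q(0) = 0$, which is why a naive induction does not immediately close; instead I would split $h = h_{\mathrm{const}} + h_{\mathrm{rest}}$ where $h_{\mathrm{const}}$ is the constant $P(0)$ (geometric/constant sequence, Hilbert depth controlled by Corollary \ref{c19} with $r=1$, giving $\qdepth = 1$) and $h_{\mathrm{rest}}(j) = h(j) - P(0)$ for $j \ge 1$, which after a $1$-shift (Proposition \ref{propo}) is governed by a polynomial of degree $n$ again but now with a known positive value at its new left endpoint. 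Using $\qdepth(h) \le$ something built from both pieces via the Lemma \ref{recur} recursion and the sub-additivity-type control, one should be able to set up the recursion $f(n) \le 2 f(n-1)$ with $f(0) = 2$ (the constant case, where $\qdepth = 1 < 2$, or a degree-$0$ polynomial $P \equiv P(0)$ giving $\qdepth = 1$, so $f(0)=2$ works as a bound), yielding $f(n) \le 2^{n+1}$. I would therefore organize the final proof as: (i) the finite-difference identity for $\beta_k^d$ via iterating Lemma \ref{recur}; (ii) base case $n = 0$ (or $n=1$, already shown in Proposition \ref{l31}/Theorem \ref{arit}); (iii) the inductive splitting $h = P(0) + (h - P(0))$, shift, and a lemma saying $\qdepth(h) \le 2\,\qdepth(\text{shifted lower-degree piece})$ or the contrapositive ``if $\beta_k^{2m}$ for the lower piece is eventually negative, so is some $\beta_{k'}^{2m}$ for $h$''; the delicate point is controlling how the constant summand interacts with the binomial weights so that the doubling, and not a worse constant, comes out.
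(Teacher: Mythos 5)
There is a genuine gap: what you have written is a strategy sketch whose decisive steps are left as hopes rather than proofs, and the central tool you would need does not exist in the paper and is not supplied by you. Your inductive splitting $h = P(0)+h_{\mathrm{rest}}$ would require an \emph{upper} bound for the Hilbert depth of a sum in terms of its pieces (your proposed ``lemma saying $\qdepth(h)\leq 2\,\qdepth(\text{shifted lower-degree piece})$''), but the only result available, Proposition \ref{p110}, is a lower bound $\qdepth(g+h)\geq\min\{\qdepth(g),\qdepth(h)\}$, and adding a summand can only help non-negativity of the $\beta_k^d$, so no such upper bound follows from the decomposition. Moreover, as you yourself note, after the $1$-shift the piece $h_{\mathrm{rest}}[1](j)=(j+1)Q(j+1)$ is again of degree $n$, so the degree does not drop and the recursion $f(n)\leq 2f(n-1)$ has no base on which to stand; the factor $2$ is asserted, not derived. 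Your first route (iterating Lemma \ref{recur} and aiming for a negative $\beta_k^d$ with $k$ near $2^n$ at $d=2^{n+1}+1$) is also not carried out, and the target $k$ is wrong in the easy regime: when $P(1)/P(0)$ is small the bound already follows from $\qdepth(h)\leq c(h)$, while in the hard regime the negativity is found at $k=2$, not near $2^n$.

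The paper's proof is short and entirely different in spirit: write $\alpha_j=a_j/a_0$ and split into two cases according to whether $\alpha_1+\cdots+\alpha_n$ is below or above $2^{n+1}$. If it is below, Proposition \ref{p1} gives $\qdepth(h)\leq c(h)=\lfloor\alpha_1+\cdots+\alpha_n\rfloor+1\leq 2^{n+1}$. If it is at least $2^{n+1}$, one regards $d\mapsto\beta_2^d(h)=\frac{P(0)}{2}d^2-d\bigl(\frac{P(0)}{2}+P(1)\bigr)+P(1)+P(2)$ as a quadratic in $d$, computes its (normalized) discriminant
$\Delta'=(\alpha_1+\cdots+\alpha_n)^2-\sum_{j=1}^{n}(2^{j+1}-1)\alpha_j-\frac{7}{4}$,
and shows by an elementary estimate that $\Delta'\geq\frac94$, so the smaller root $d_1$ satisfies $d_1\leq 2^{n+1}$ and the root gap exceeds $3$; hence some integer $d\leq 2^{n+1}+1$ lies strictly between the roots, giving $\beta_2^d(h)<0$ and therefore $\qdepth(h)<d$, i.e.\ $\qdepth(h)\leq 2^{n+1}$. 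If you want to salvage your write-up, the missing idea to import is exactly this dichotomy together with the sign analysis of the quadratic $\beta_2^d(h)$; only the values $P(0),P(1),P(2)$ are needed, and no induction on $n$ is involved.
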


\begin{proof}
Assume $P=a_nX^n+\cdots+a_1X+a_0$ with $a_i\in \mathbb Z$. Note that $a_n>0$ and $a_0=P(0)>0$. We
denote $\alpha_j=\frac{a_j}{a_0}$ for all $1\leq j\leq n$. From Proposition \ref{p1} we have that
\begin{equation}\label{cehas}
\qdepth(h)\leq c(h)=\left\lfloor \frac{P(1)}{P(0)} \right\rfloor =\left\lfloor \alpha_1+\cdots+\alpha_n \right\rfloor + 1 \leq \alpha_1+\cdots+\alpha_n+1 .
\end{equation}
Let $d\geq 2$ be an integer. Then
\begin{equation}\label{beta2d}
\beta_2^d(h)=\binom{d}{2}P(0)-\binom{d-1}{1}P(1)+\binom{d-2}{0}P(2)=\frac{P(0)}{2}d^2 - d\left(\frac{P(0)}{2}+P(1)\right)+P(1)+P(2).
\end{equation}
Let 
$$\Delta:=\left(\frac{P(0)}{2}+P(1)\right)^2-2P(0)(P(1)+P(2))=P(1)^2 + \frac{P(0)^2}{4} - P(0)P(1) - 2P(0)P(2).$$  
We have that
$$\Delta':=\frac{\Delta}{a_0^2} = (1+\alpha_1+\cdots+\alpha_n)^2 + \frac{1}{4} - (1+\alpha_1+\cdots+\alpha_n) - 2 (1+2\alpha_1+
4\alpha_2+\cdots+2^{n}\alpha_n) = $$
\begin{equation}\label{deltap}
= (\alpha_1+\cdots+\alpha_n)^2 - (2^2-1) \alpha_1 - (2^3-1) \alpha_2 - \cdots - (2^{n+1}-1)\alpha_n - \frac{7}{4}. 
\end{equation}
Note that, if $\Delta'>0$ (which is equivalent to $\Delta>0$) then, from \eqref{beta2d}, it follows that for 
any integer $d$ with
$$d \in (d_1,d_2)\text{ where }d_{1,2}=\frac{3}{2}+\alpha_1+\cdots+\alpha_n \pm \sqrt{\Delta'},$$ 
we have that $\beta_2^d(h)<0$ and therefore $\qdepth(h)<d$.

If $\alpha_1+\cdots+\alpha_n\geq 2^{n+1}$ then from \eqref{deltap} it follows that 
\begin{align*}
& \Delta' \geq (\alpha_1+\cdots+\alpha_n)^2 - (2^{n+1}-1)(\alpha_1+\cdots+\alpha_n) -\frac{7}{4} \geq \\
& \geq (\alpha_1+\cdots+\alpha_n)^2 - 2^{n+2}(\alpha_1+\cdots+\alpha_n)+2^{n+1}(\alpha_1+\cdots+\alpha_n)+(\alpha_1+\cdots+\alpha_n)-\frac{7}{4} > \\
& > (\alpha_1+\cdots+\alpha_n)^2 - 2^{n+2}(\alpha_1+\cdots+\alpha_n) + 2^{n+1}\cdot 2^{n+1} + 2^{n+1}-\frac{7}{4} = \\
& = (\alpha_1+\cdots+\alpha_n-2^{n+1})^2 + 4 - \frac{7}{4} \geq \frac{9}{4}.
\end{align*}
In particular, it follows that 
\begin{equation}\label{deunu}
d_1:= \frac{3}{2}+\alpha_1+\cdots+\alpha_n - \sqrt{\Delta'} \leq 2^{n+1}\text{ and }d_2-d_1 \geq 3.
\end{equation}
The conclusion follows from \eqref{cehas}, \eqref{beta2d}, \eqref{deunu} and the above considerations.
\end{proof}

Let $a,b,n$ be three positive integers. We consider the sequence 
$$h\in\mathcal H_0,\; h(j)=aj^n+b\text{ for all }j\geq 0.$$
We denote $\alpha:=\frac{a}{b}$. Note that, by Proposition \ref{p111}, $\qdepth(h)$ depends only on $\alpha$ and $n$.

We have 
$$c(h)=\left\lfloor \frac{h(1)}{h(0)} \right\rfloor = \left\lfloor \alpha \right \rfloor +1.$$
Since $k_0(h)=0$ and $k_f(h)=\infty$, from Proposition \ref{p1} it follows that 
\begin{equation}\label{ich}
0\leq \qdepth(h)\leq c(h)=\left \lfloor \alpha \right \rfloor +1\leq \alpha+1,
\end{equation}
where  With the above notations, we have:

\begin{lema}\label{bdd}
For any integer $d\geq 1$ we have that $\beta_d^d(h)> 0$.
\end{lema}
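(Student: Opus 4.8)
The plan is to use the explicit formula $\beta_d^d(h)=\sum_{j=0}^d(-1)^{d-j}\binom{d-j}{d-j}h(j)=\sum_{j=0}^d(-1)^{d-j}h(j)$, since $\binom{d-j}{d-j}=1$ for all $j\le d$. Thus
$$
\beta_d^d(h)=\sum_{j=0}^d(-1)^{d-j}h(j)=\sum_{j=0}^d(-1)^{d-j}(aj^n+b).
$$
The constant term $b$ contributes $b\sum_{j=0}^d(-1)^{d-j}=b\cdot\varepsilon_d$, where $\varepsilon_d\in\{0,1\}$ ($=1$ if $d$ is even, $=0$ if $d$ is odd). The genuinely interesting part is the alternating sum $a\sum_{j=0}^d(-1)^{d-j}j^n$, which I want to show is strictly positive (or at least makes the total strictly positive after adding the $b$-term).

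First I would isolate the alternating power sum $A_n(d):=\sum_{j=0}^d(-1)^{d-j}j^n$ and establish its sign. The cleanest route is the finite-difference interpretation: pairing consecutive terms, $A_n(d)=\sum_{i\ge 1}\big((2i-1+r)^n-(2i-2+r)^n\big)+(\text{leftover})$ where $r$ is chosen so the top index $d$ survives with a plus sign; since $t\mapsto t^n$ is strictly increasing on $\mathbb Z_{\ge 0}$, every bracketed difference $(2i-1+r)^n-(2i-2+r)^n$ is positive, and one checks that when a leftover (unpaired) term occurs it is $0^n=0$. Hence $A_n(d)\ge 1>0$ for all $d\ge 1$ (the smallest such difference, $1^n-0^n=1$, already forces strict positivity whenever $d\ge 1$). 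Therefore $\beta_d^d(h)=a\,A_n(d)+b\,\varepsilon_d\ge a\cdot 1+0=a>0$, which gives the claim.

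The one case to handle with care is $d$ odd (so $\varepsilon_d=0$): there the $b$-term vanishes entirely and strict positivity of $\beta_d^d(h)$ rests solely on $a\,A_n(d)>0$, so I need $A_n(d)\ge 1$ rather than merely $A_n(d)\ge 0$. For $d$ odd the pairing is $(1^n-0^n)+(3^n-2^n)+\cdots+(d^n-(d-1)^n)$ with no leftover term, and the first bracket is $1$, so indeed $A_n(d)\ge 1$. For $d$ even the pairing is $(2^n-1^n)+(4^n-3^n)+\cdots+(d^n-(d-1)^n)+0^n$, again a sum of strictly positive terms, so $A_n(d)\ge 1$ as well (and in any event the $b\varepsilon_d=b>0$ term already suffices). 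I expect the only mild obstacle to be bookkeeping the parity of $d$ so that the top term $d^n$ always enters with sign $+$ and the pairing is set up correctly; once that is pinned down the positivity of each difference is immediate from monotonicity of $t\mapsto t^n$, and the proof is short.
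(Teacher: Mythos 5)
Your argument is correct and is essentially the paper's own proof: expand $\beta_d^d(h)$ as the alternating sum $h(d)-h(d-1)+\cdots$, split off the $a$-part as an alternating sum of $n$-th powers, and pair consecutive terms using the strict monotonicity of $t\mapsto t^n$. Your bookkeeping of the constant term $b$ (contributing $b$ for $d$ even and $0$ for $d$ odd) is in fact slightly more careful than the paper's displayed identity, which silently drops that nonnegative contribution, but the substance of the argument is the same.
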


\begin{proof}
We have that
$$\beta_d^d(h)=h(d)-h(d-1)+h(d-2)-h(d-3)+\cdots = a(d^n-(d-1)^n+(d-2)^n-(d-3)^n+\cdots)> 0,$$
as required.
\end{proof}

\begin{prop}\label{l31}
\begin{enumerate}
\item[(1)] If $a<3b$ then $\qdepth(h)=c(h)$.
\item[(2)] If $a\geq 3b$ then $\qdepth(h)\geq 3$.
\end{enumerate}
\end{prop}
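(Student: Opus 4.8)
The plan is to lean on the upper bound $\qdepth(h)\le c(h)=\lfloor\alpha\rfloor+1$ already recorded in \eqref{ich}, so that in part (1) it only remains to prove $\qdepth(h)\ge c(h)$, and in part (2) only $\qdepth(h)\ge 3$. For this I would evaluate $\beta_k^d(h)$ at small indices directly from \eqref{ec1}: one has $\beta_0^d(h)=h(0)=b>0$, $\beta_1^d(h)=h(1)-d\,h(0)=a-(d-1)b$, and
$$\beta_2^d(h)=\binom{d}{2}b-(d-1)(a+b)+(a2^n+b),$$
which for $d=3$ simplifies to $\beta_2^3(h)=a(2^n-2)+2b$. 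Finally, Lemma \ref{bdd} supplies $\beta_d^d(h)>0$ for every $d\ge 1$. These three ingredients, together with the elementary fact that $2^n-2\ge 0$ for $n\ge 1$, cover all the indices $k$ that can occur when $d\le 3$.

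For part (1), since $a<3b$ is equivalent to $0<\alpha<3$, we have $c(h)\in\{1,2,3\}$, and I would simply check that $\beta_k^{c(h)}(h)\ge 0$ for all $0\le k\le c(h)$. The index $k=0$ gives $b>0$, and $k=c(h)$ gives a positive value by Lemma \ref{bdd}. For $k=1$ (when $c(h)\ge 2$) note that $\beta_1^{c(h)}(h)=a-\lfloor\alpha\rfloor b\ge a-\alpha b=0$. The only remaining case is $k=2$ with $c(h)=3$, i.e. $2\le\alpha<3$, where $\beta_2^3(h)=a(2^n-2)+2b\ge 2b>0$. Hence $\qdepth(h)\ge c(h)$, and combining this with \eqref{ich} yields $\qdepth(h)=c(h)$.

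For part (2), assume $a\ge 3b$, so $3\le c(h)$ and it suffices to show $\beta_k^3(h)\ge 0$ for $k=0,1,2,3$. Here $\beta_0^3(h)=b>0$; $\beta_1^3(h)=a-2b\ge b>0$; $\beta_2^3(h)=a(2^n-2)+2b\ge 2b>0$ since $n\ge 1$; and $\beta_3^3(h)>0$ by Lemma \ref{bdd}. Therefore $\qdepth(h)\ge 3$.

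I do not expect a genuine obstacle here: the argument is a short finite case analysis on the value of $c(h)$, and the only non-elementary input is Lemma \ref{bdd} (positivity of the alternating power sum $d^n-(d-1)^n+(d-2)^n-\cdots$), which is already available. The one mild point to keep track of is that the coefficient $2^n-2$ of $a$ in $\beta_2^3(h)$ is nonnegative precisely because $n\ge 1$, which is what makes the lower bound $\qdepth(h)\ge 3$ uniform in the degree $n$.
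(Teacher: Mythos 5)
Your argument is correct and follows essentially the same route as the paper: use the upper bound \eqref{ich}, verify $\beta_k^{c(h)}(h)\ge 0$ (resp.\ $\beta_k^3(h)\ge 0$) by direct computation of $\beta_0,\beta_1,\beta_2$, and invoke Lemma \ref{bdd} for the top index $k=d$. The only difference is cosmetic — you treat $\beta_1^d$ uniformly instead of case by case — so nothing further is needed.
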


\begin{proof}
(1) If $a<b$, i.e. $c(h)=1$, then 
$$\beta_0^1(h)=h(0)=b>0\text{ and }\beta_1^1(h)=h(1)-h(0)=a,$$
hence $\qdepth(h)\geq 1$. Thus, we are done by \eqref{ich}.

If $b\leq a < 2b$, i.e. $c(h)=2$, then
$$ \beta_0^2(h)=h(0)=b>0,\;\beta_1^2(h)=h(1)-2h(0)=a-b\geq 0.$$
Also, from Lemma \ref{bdd} we have that $\beta_2^2(h)> 0$.
Hence $\qdepth(h)\geq 2$ and the conclusion follows from \eqref{ich}.

If $2b\leq a <3b$, i.e. $c(h)=3$, then
\begin{align*}
& \beta_0^3(h)=h(0)=b>0,\;\beta_1^3(h)=h(1)-3h(0)=a-2b\geq 0\text{ and }\\
& \beta_2^3(h)=3h(0)-2h(1)+h(2)=a(2^n-2)+2b\geq 2>0
\end{align*}
Also, from Lemma \ref{bdd} we have that $\beta_3^3(h)>0$.
Hence $\qdepth(h)\geq 3$ and the conclusion follows from \eqref{ich}.

(2) The proof is similar to the proof of the case $2b\leq a<3b$.
\end{proof}

Now, assume $c(h)\geq 4$ and let $4\leq d\leq c(h)$. We have that
$$\beta_2^d(h)=\binom{d}{2}h(0)-\binom{d-1}{1}h(1)+\binom{d-2}{0}h(2)=\frac{bd(d-1)}{2}-(d-1)(a+b)+2^n a+b.$$
By straightforward computations, we get
\begin{equation}\label{fd}
f(d):=\beta_2^d(h)=\frac{b}{2}d^2 - \left(a+\frac{3}{2}b\right)d + (2^n+1)a+2b.
\end{equation}
The discriminant of the quadratic equation $f(x)=0$ is
\begin{equation}\label{del}
\Delta = a^2-(2^{n+1}-1)ab-\frac{7}{4}b^2
\end{equation}
Note that the sign of $\Delta$ is the sign of the quadratic expression
$$h(\alpha)=\alpha^2 -(2^{n+1}-1)\alpha-\frac{7}{4}.$$ 
Since the discriminant of $h$ is 
$\Delta':=(2^{n+1}-1)^2+7=2^{2n+2}-2^{n+2}+8,$
it follows that 
\begin{equation}\label{alfa1}
\Delta\leq 0\text{ and thus }f(d)\geq 0\text{ for }\alpha \leq \alpha_1:= 2^n + \sqrt{2^{2n}-2^n+2} - \frac{1}{2}.
\end{equation}
For $\alpha>\alpha_1$, we let $g(\alpha)=$ the lowest solution of the equation $f(x)=0$, that is
$$g(\alpha):=\alpha+\frac{3}{2} - \sqrt{\alpha^2 - (2^{n+1}-1)\alpha-\frac{7}{4}}.$$
Now, we can prove the following result:

\begin{lema}\label{cheie}
Let $d\leq c(h)$ be an integer.
\begin{enumerate}
\item[(1)] If $\alpha\leq \alpha_1$ then $\beta_2^d(h)\geq 0$.
\item[(2)] If $\alpha>\alpha_1$ and $d \leq g(\alpha)$, then $\beta_2^d(h)\geq 0$.
\item[(3)] If $\alpha>\alpha_1$ and $d > g(\alpha)$, then $\qdepth(h)<d$.
\end{enumerate}
\end{lema}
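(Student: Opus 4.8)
The plan is to reduce everything to the quadratic $f(d)=\beta_2^d(h)$ computed in \eqref{fd} and to the sign analysis of its discriminant $\Delta$ from \eqref{del}, combined with the basic monotonicity of $\beta$-numbers supplied by Proposition \ref{p13}. The three parts are really two ideas: parts (1) and (2) are statements that $f(d)\geq 0$, and part (3) is a statement that $f(d)<0$ forces $\qdepth(h)<d$ via Corollary \ref{c14}.

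For part (1), I would argue that when $\alpha\leq\alpha_1$ the discriminant $\Delta$ is $\leq 0$ by \eqref{alfa1}, and since the leading coefficient $\frac{b}{2}$ of $f$ is positive, $f(x)\geq 0$ for all real $x$; in particular $\beta_2^d(h)=f(d)\geq 0$ for every integer $d$, a fortiori for $d\leq c(h)$. For part (2), when $\alpha>\alpha_1$ the quadratic $f$ has two real roots $g(\alpha)\leq \overline{g}(\alpha)$, with $g(\alpha)$ the smaller one as defined just before the lemma; since $f$ opens upward, $f(x)\geq 0$ on $(-\infty,g(\alpha)]$, so $d\leq g(\alpha)$ gives $\beta_2^d(h)=f(d)\geq 0$ immediately. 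No case distinction beyond ``is $\Delta$ positive?'' is needed here.

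For part (3), suppose $\alpha>\alpha_1$ and $d>g(\alpha)$; I also want $d\leq c(h)$, which by \eqref{ich} is where the hypothesis $d\leq c(h)$ of the lemma is used, together with $\qdepth(h)\leq c(h)$. The point is that I must place $d$ strictly between the two roots of $f$ so that $f(d)<0$. This is exactly where the assumption $d\leq c(h)$ does work: one checks that the larger root $\overline{g}(\alpha)=\alpha+\frac32+\sqrt{\alpha^2-(2^{n+1}-1)\alpha-\frac74}$ exceeds $c(h)=\lfloor\alpha\rfloor+1$, so $g(\alpha)<d\leq c(h)<\overline{g}(\alpha)$ and hence $\beta_2^d(h)=f(d)<0$. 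A $\beta$-number being negative at level $d$ means $d$ is not an admissible depth, so $\qdepth(h)<d$ — but a cleaner route, avoiding the need to re-examine whether $d$ itself is the witness, is: if $\qdepth(h)\geq d$ then by Proposition \ref{p13} we would have $\beta_2^d(h)\geq 0$, contradiction. I expect the main obstacle to be the estimate $\overline{g}(\alpha)>\lfloor\alpha\rfloor+1$ for all $\alpha>\alpha_1$: it amounts to verifying $\sqrt{\alpha^2-(2^{n+1}-1)\alpha-\frac74}>\lfloor\alpha\rfloor-\alpha-\frac12$, and since the right side is negative (as $\lfloor\alpha\rfloor\leq\alpha$ and $\alpha>0$) while the left side is $\geq 0$, the inequality is in fact automatic once the radicand is nonnegative, i.e. once $\alpha\geq\alpha_1$. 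So this ``obstacle'' dissolves, and the whole lemma rests on the sign of a single upward-opening quadratic together with Proposition \ref{p13}.
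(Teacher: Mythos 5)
Your proposal is correct and follows essentially the same route as the paper: parts (1) and (2) are the sign analysis of the upward-opening quadratic $f(d)=\beta_2^d(h)$ via the discriminant \eqref{del}, and part (3) places $d$ strictly between the two roots using $g(\alpha)<d\leq c(h)\leq \alpha+1<\alpha+\frac{3}{2}+\sqrt{\Delta'}$, forcing $\beta_2^d(h)<0$. Your explicit appeal to Proposition \ref{p13} to pass from $\beta_2^d(h)<0$ to $\qdepth(h)<d$ is the same mechanism the paper leaves implicit, so no substantive difference remains.
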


\begin{proof}
(1) It is clear.

(2) From the above considerations, the hypothesis implies $\beta_2^d(h)=f(d)\geq 0$.

(3) It follows from the fact that 
$$g(\alpha)< d\leq c(h) < \alpha+\frac{3}{2} + \sqrt{\alpha^2 - (2^{n+1}-1)\alpha-\frac{7}{4}},$$
\eqref{fd} and \eqref{del}.
\end{proof}

Now, we consider the function $g(x)$, defined on $(\alpha_1,\infty)$. By straightforward computations,
we can show that $g'(x)<0$ for all $x\in (\alpha_1,\infty)$ and thus $g(x)$ is decreasing. 

On the other hand, we have 
\begin{equation}\label{limite}
\lim_{x\searrow \alpha_1}g(x)=\alpha_1+\frac{3}{2}\text{ and }\lim_{x\to\infty}g(x)=2^n+1.
\end{equation}
It follows that $g(x)$ takes values in the interval $(2^n+1,\alpha_1+\frac{3}{2})$. 

\subsection*{The case $n=1$}

If $n=1$ then, from \eqref{alfa1} and \eqref{limite}, it follows that
\begin{equation}\label{caz1}
\alpha_1=\frac{7}{2},\; \lim_{x\to\alpha_1}g(x)=5,\; g(4)=4\text{ and }\lim_{x\to\infty}g(x)=3.
\end{equation}
Now, we have all the ingredients in order to prove the following theorem:

\begin{teor}\label{arit}
Let $h\in\mathcal H_0$, $h(j)=aj+b$ for all $j\geq 0$, be an arithmetic sequence with $a,b$ positive integers. Then:
$$ \qdepth(h) = \begin{cases} 1,& a<b \\ 2,& b\leq a < 2b \\ 3,& 2b\leq a < 3b \\ 4,& 3b\leq a\leq 4b \\ 3,& a>4b \end{cases}.$$
\end{teor}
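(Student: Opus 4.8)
The plan is to reduce everything to the auxiliary quadratic $f(d)=\beta_2^d(h)$ from \eqref{fd} together with the bounds already established. Recall that for $n=1$ we have $h(0)=b$, $h(1)=a+b$, so $c(h)=\lfloor\alpha\rfloor+1$ where $\alpha=a/b$, and by \eqref{ich} we always have $\qdepth(h)\le c(h)$. The three cases $a<b$, $b\le a<2b$, $2b\le a<3b$ are exactly Proposition \ref{l31}(1): there $\qdepth(h)=c(h)$, which gives the values $1,2,3$ respectively. So the real content is the range $a\ge 3b$, i.e. $\alpha\ge 3$, where $c(h)\ge 4$ and Proposition \ref{l31}(2) only tells us $\qdepth(h)\ge 3$.

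For $\alpha\ge 3$ I would split according to where $\alpha$ sits relative to $\alpha_1=\tfrac72$ (from \eqref{caz1}). First, when $3\le\alpha\le\alpha_1$, Lemma \ref{cheie}(1) gives $\beta_2^d(h)\ge 0$ for all $d\le c(h)$; combined with $\beta_0^d(h)=b>0$, $\beta_1^d(h)=h(1)-d\cdot h(0)=a-(d-1)b\ge 0$ for $d\le c(h)=\lfloor\alpha\rfloor+1$ (note here $\lfloor\alpha\rfloor=3$ so $c(h)=4$), and $\beta_d^d(h)>0$ by Lemma \ref{bdd}, one needs to check the remaining $\beta_k^d(h)$ for $k=3,\dots,d-1$; for $n=1$ this is a short direct computation since $h$ is linear and $\beta_k^d(h)$ telescopes, or one can invoke Proposition \ref{p18}. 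This should yield $\qdepth(h)=4$ when $3b\le a\le \tfrac72 b$, and by the monotone behavior of $g$ this will extend: using Lemma \ref{cheie}(3), as soon as $c(h)>g(\alpha)$ we get $\qdepth(h)<c(h)$, and one must then show the value drops exactly to $4$ on $\tfrac72 b<a\le 4b$ and to $3$ on $a>4b$.

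Concretely, for $\alpha\in(\alpha_1,\infty)$ the function $g(\alpha)$ is decreasing from $5$ (at $\alpha_1$) through $g(4)=4$ down to $3$ (as $\alpha\to\infty$), by \eqref{caz1}. So: (i) if $\alpha\le 4$, then $g(\alpha)\ge 4$, and I claim $\qdepth(h)=4$ — the upper bound $\qdepth(h)\le 4$ follows because $\beta_2^d(h)<0$ for any integer $d$ with $g(\alpha)<d<\alpha+\tfrac32+\sqrt{\alpha^2-3\alpha-\tfrac74}$ (the interval between the two roots of $f$), and when $\alpha\le 4$ the integer $d=5$ lies in that interval (one checks $g(\alpha)\le 5$ and the upper root exceeds $5$ for $\alpha>\alpha_1$), so $\qdepth(h)<5$; the lower bound $\qdepth(h)\ge 4$ comes from verifying $\beta_k^4(h)\ge 0$ for all $k\le 4$, using $\beta_2^4(h)=f(4)=10b-4a\ge 0$ precisely when $a\le\tfrac52 b$ — wait, that is too restrictive, so instead one uses $f(4)\ge 0\iff d=4\le g(\alpha)$, i.e. $\alpha\le 4$, which is exactly our range. (ii) If $\alpha>4$, then $g(\alpha)<4$, so $d=4>g(\alpha)$ forces $\qdepth(h)<4$ by Lemma \ref{cheie}(3); combined with $\qdepth(h)\ge 3$ from Proposition \ref{l31}(2), we get $\qdepth(h)=3$.

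The main obstacle is pinning down the boundary cases — verifying that the integer thresholds fall exactly where claimed, in particular that $d=5$ genuinely lies strictly between the two roots of $f$ throughout $\alpha\in(\alpha_1,4]$ (so that $\qdepth\le 4$ there), that $d=4$ is admissible exactly for $\alpha\le 4$ (boundary $a=4b$ included, which is why the theorem writes $3b\le a\le 4b$), and that for $k\ge 3$ the quantities $\beta_k^d(h)$ cause no trouble in the linear case. All of these are finite, explicit inequalities in $\alpha$; the delicate point is just handling the closed versus open endpoints $a=3b$ and $a=4b$ correctly, which is where $g(4)=4$ and the exact value of $\alpha_1$ matter.
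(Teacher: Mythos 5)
Your proposal takes essentially the same route as the paper's proof: Proposition \ref{l31} disposes of $a<3b$, and for $\alpha\geq 3$ the upper bounds come from the sign of the quadratic $\beta_2^d(h)$ via Lemma \ref{cheie}, the monotonicity of $g$ and the values $\alpha_1=\frac{7}{2}$, $g(4)=4$ from \eqref{caz1}, while the lower bound $\qdepth(h)\geq 4$ on $3b\leq a\leq 4b$ reduces to the finite check of $\beta_k^4(h)$, which the paper finishes with $\beta_3^4(h)=2(a-b)>0$ and Lemma \ref{bdd}. One small caveat: your parenthetical alternative of invoking Proposition \ref{p18} does not work here (for $d=4$ it would require $h(2)\geq 3h(1)$, i.e.\ $2a+b\geq 3a+3b$, which is false), so the direct computation of $\beta_3^4(h)$ you also mention is the correct way to close that step; likewise your first expression for $f(4)$ is a slip, but the corrected criterion $f(4)\geq 0\iff \alpha\leq 4$ is right.
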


\begin{proof}
If $a<3b$ then the conclusion follows from Proposition \ref{l31}(1) and the fact that $c(h)=\left\lfloor \frac{a}{b} \right\rfloor + 1$.
Now, assume that $a\geq 3b$. We have that $c(h)\geq 4$ and $\alpha=\frac{a}{b}\geq 3$. Also, from Proposition \ref{l31}(2) we have
that $\qdepth(h)\geq 3$. If $a>4b$, i.e. $\alpha>4$, then from \eqref{caz1} and the fact that $g(x)$ is decreasing it follows that
 $g(\alpha)<4$. Hence, Lemma \ref{cheie} implies $\qdepth(h)<4$ and thus $\qdepth(h)=3$.

In order to complete the proof, assume that $3b\leq a\leq 4b$. Again, since $\alpha\in [3,4]$, from \eqref{caz1} we have that
$g(\alpha)<5$ and thus, from Lemma \ref{cheie} it follows that $\qdepth(h)\leq 4$. On the other hand, we have that 
$\beta^4_0(h)=h(0)=b>0$. Since $\alpha\geq 3$, we also have $\beta^4_1(h)\geq 0$. From Lemma \ref{cheie} it follows that $\beta^4_2(h)\geq 0$.
Also, Lemma \ref{bdd} implies $\beta^4_4(h)>0$ and
$$\beta^4_3(h)=h(3)-2h(2)+3h(1)-4h(0)=2(a-b)>0.$$
Hence, $\qdepth(h)=4$, as required.
\end{proof}

Now, we consider the case $n\geq 2$. First, we give a sharp estimation for $\alpha_1$.

\begin{lema}
 If $n\geq 2$ then $\alpha_1\in (2^{n+1}-1,2^{n+1}-\frac{1}{2})$ and thus 
 $$\lim_{x\searrow \alpha_1}g(x) \in (2^{n+1}+\frac{1}{2},2^{n+1}+1).$$
\end{lema}

\begin{proof}
It follows from the inequalities
$$ \left(2^n-\frac{1}{2}\right)^2 < 2^{2n}-2^n+2 < 2^{2n} \text{ for all }n\geq 2,$$
and \eqref{limite}.
\end{proof}

\begin{lema}\label{liema}
If $n\geq 2$ and $2 \leq m\leq 2^n$ then the equation $g(x)=2^{n}+m$ has the (unique) solution
$$\lambda_{2^n+1-m}:=\frac{m^2 + m(2^{n+1}-3) +2^{2n} -3\cdot 2^n +4}{2m-2}.$$
Moreover we have that $\lambda_0<\lambda_1<\cdots<\lambda_{2^n-1}$.
\end{lema}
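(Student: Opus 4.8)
The plan is to solve the equation $g(x)=2^n+m$ directly and then check monotonicity of the resulting family $\lambda_i$. Recall from the definition that for $x>\alpha_1$,
$$g(x)=x+\frac{3}{2}-\sqrt{x^2-(2^{n+1}-1)x-\frac{7}{4}}.$$
First I would set $g(x)=2^n+m$ and isolate the radical, obtaining $\sqrt{x^2-(2^{n+1}-1)x-\frac{7}{4}}=x+\frac{3}{2}-2^n-m$. For this to have a solution we need the right-hand side to be non-negative; I will record this condition and verify at the end that the claimed $\lambda_{2^n+1-m}$ satisfies it (this uses the range $2\le m\le 2^n$, which forces $x$ to be large enough). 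Squaring both sides gives
$$x^2-(2^{n+1}-1)x-\frac{7}{4}=x^2+2x\left(\frac{3}{2}-2^n-m\right)+\left(\frac{3}{2}-2^n-m\right)^2,$$
and the $x^2$ terms cancel, leaving a \emph{linear} equation in $x$. Solving it yields
$$x=\frac{\left(\frac{3}{2}-2^n-m\right)^2+\frac{7}{4}}{(2^{n+1}-1)+2\left(\frac{3}{2}-2^n-m\right)}=\frac{\left(\frac{3}{2}-2^n-m\right)^2+\frac{7}{4}}{2-2m},$$
and after clearing the $\frac14$'s and expanding the square in the numerator this simplifies to the stated
$$\lambda_{2^n+1-m}=\frac{m^2+m(2^{n+1}-3)+2^{2n}-3\cdot 2^n+4}{2m-2}.$$
Uniqueness of the solution is automatic because, after the valid squaring step, the equation is linear; alternatively it follows from the strict monotonicity of $g$ on $(\alpha_1,\infty)$ already established via $g'(x)<0$.

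For the second assertion, $\lambda_0<\lambda_1<\cdots<\lambda_{2^n-1}$, the cleanest route is to reindex: writing $i=2^n+1-m$, so $m=2^n+1-i$, the inequality $\lambda_{i}<\lambda_{i+1}$ for $0\le i\le 2^n-2$ translates into a statement about consecutive values of $m$ in the range $\{3,\dots,2^n\}$, namely that $g^{-1}(2^n+m)$ is \emph{decreasing} in $m$. But this is immediate from the monotonicity of $g$ itself: since $g$ is strictly decreasing on $(\alpha_1,\infty)$, its inverse is strictly decreasing, and $m\mapsto 2^n+m$ is increasing, so $m\mapsto\lambda_{2^n+1-m}$ is strictly decreasing, i.e.\ $i\mapsto\lambda_i$ is strictly increasing. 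I would spell this out and also note that all the $\lambda_i$ lie in the relevant range $(\alpha_1,\infty)$, which is where $g$ and hence $g^{-1}$ are defined; this follows from the limits in \eqref{limite}, since $g$ maps $(\alpha_1,\infty)$ onto $(2^n+1,\alpha_1+\frac32)$ and each target value $2^n+m$ with $2\le m\le 2^n$ lies in that interval (using the previous lemma's estimate $\alpha_1+\frac32>2^{n+1}+\frac12\ge 2^n+m$ for the upper end, and $2^n+m>2^n+1$ trivially for the lower end).

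The main obstacle is purely bookkeeping: verifying that the sign condition from the squaring step ($x+\frac32-2^n-m\ge 0$) actually holds at $x=\lambda_{2^n+1-m}$, and double-checking the algebraic simplification of the numerator, since a careless expansion of $\left(\frac32-2^n-m\right)^2$ can easily drop a cross-term. I would handle the sign condition by observing that $\lambda_{2^n+1-m}\in(2^n+1,\alpha_1+\frac32)$ forces $g(\lambda_{2^n+1-m})=2^n+m$ to genuinely be attained — or more directly, since $2m-2>0$ in our range, one checks $x+\frac32-2^n-m$ has the same sign as a manifestly positive quantity after substituting the closed form. Everything else is a routine but careful computation, which in the write-up I would present compactly rather than in full detail.
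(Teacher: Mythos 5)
Your proposal is correct and is essentially the paper's own argument: the paper also obtains $\lambda_{2^n+1-m}$ by ``straightforward computations'' (isolating the radical in $g(x)=2^{n}+m$ and solving the linear equation left after squaring) and deduces both uniqueness and the chain $\lambda_0<\lambda_1<\cdots<\lambda_{2^n-1}$ from the fact that $g$ is decreasing, while you add the worthwhile extra checks that the squaring step is legitimate and that each target value $2^n+m$ lies in the range of $g$. One cosmetic slip: in your intermediate display the denominator should be $-(2^{n+1}-1)-2\left(\tfrac{3}{2}-2^n-m\right)=2m-2$, not $2-2m$; your final closed form is nevertheless the correct one.
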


\begin{proof}
It follows by straightforward computations. The uniqueness of the solution and the inequalities 
$\lambda_0<\lambda_1<\cdots<\lambda_{2^n-1}$ are consequences of the fact that $g(x)$ is decreasing.
\end{proof}

As above, $\alpha=\frac{a}{b}$ and $c(h)=\lfloor \alpha \rfloor + 1$. We consider the expression 
$$\eq(h):=\begin{cases} c(h), & \alpha \in (0,2^{n+1}-1) \\ 2^{n+1}, & \alpha \in [2^{n+1}-1,\alpha_1] \\
                        2^{n+1}-1, & \alpha \in (\alpha_1,\alpha_2], \\ \vdots \\ 2^{n}+2, & \alpha \in (\alpha_{2^n-2},\alpha_{2^n-1}] \\
												2^{n}+ 1, & \alpha \in (\alpha_{2^n-1},\infty) \end{cases}.$$
We use also the following lemma:

\begin{lema}\label{q4}
If $n\geq 2$ and $c(h)\geq 4$ then $\qdepth(h)\geq 4$.
\end{lema}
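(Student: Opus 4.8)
The plan is to verify directly that $\beta_k^4(h)\ge 0$ for all $0\le k\le 4$, which by definition gives $\qdepth(h)\ge 4$. First I would record the hypothesis $c(h)\ge 4$, which means $\lfloor\alpha\rfloor+1\ge 4$, i.e. $\alpha\ge 3$, equivalently $a\ge 3b$. This is exactly the regime treated in Proposition \ref{l31}(2), but here we need the sharper conclusion $\qdepth(h)\ge 4$ rather than $\ge 3$, and we are also assuming $n\ge 2$, which strengthens the inequalities at $k=2$ (the factor $2^n$ is now at least $4$).

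The key computations are the following five. For $k=0$: $\beta_0^4(h)=h(0)=b>0$. For $k=1$: $\beta_1^4(h)=h(1)-4h(0)=a+b-4b=a-3b\ge 0$ since $a\ge 3b$. For $k=4$: $\beta_4^4(h)>0$ by Lemma \ref{bdd}. For $k=3$: one computes
$$\beta_3^4(h)=h(3)-2h(2)+3h(1)-4h(0)=a(3^n-2\cdot 2^n+3)+0\cdot b = a(3^n-2^{n+1}+3),$$
and $3^n-2^{n+1}+3>0$ for all $n\ge 1$ (for $n=1$ it equals $2$, for $n=2$ it equals $8$, and $3^n$ dominates $2^{n+1}$ for $n\ge 2$), so $\beta_3^4(h)>0$. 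The main case is $k=2$: from \eqref{fd} with $d=4$ we get
$$\beta_2^4(h)=f(4)=8b-4\left(a+\tfrac32 b\right)+(2^n+1)a+2b=(2^n-3)a+4b.$$
Since $n\ge 2$ we have $2^n-3\ge 1>0$, hence $\beta_2^4(h)\ge a+4b>0$.

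The step I expect to be the only real obstacle is getting the closed forms for $\beta_2^4(h)$ and $\beta_3^4(h)$ right and confirming their positivity uniformly in $n\ge 2$; the alternative is to invoke Lemma \ref{cheie}(1)--(2) for the $k=2$ term, observing that $c(h)\ge 4$ together with $n\ge 2$ forces $4\le c(h)$ to lie below $g(\alpha)$ whenever $\alpha>\alpha_1$ (since $g$ takes values in $(2^{n+1}+\tfrac12, 2^{n+1}+1)$ near $\alpha_1$ and $c(h)\le\alpha+1$), and otherwise $\alpha\le\alpha_1$ makes $\beta_2^4(h)\ge 0$ automatic — but the direct evaluation $f(4)=(2^n-3)a+4b$ is cleaner and avoids splitting into cases. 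Collecting the five sign checks yields $\beta_k^4(h)\ge 0$ for $0\le k\le 4$, hence $\qdepth(h)\ge 4$, as required.
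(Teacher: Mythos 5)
Your proof takes essentially the same route as the paper: its own proof of Lemma \ref{q4} simply invokes Proposition \ref{l31} together with the final argument of Theorem \ref{arit}, which is exactly your term-by-term verification of $\beta_k^4(h)\geq 0$ for $0\leq k\leq 4$ (with Lemma \ref{bdd} handling $k=4$ and the quadratic \eqref{fd} handling $k=2$; your direct evaluation $f(4)=(2^n-3)a+4b$ is correct and slightly cleaner than citing Lemma \ref{cheie}).

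One slip to correct: in $\beta_3^4(h)=h(3)-2h(2)+3h(1)-4h(0)$ the coefficient of $b$ is $1-2+3-4=-2$, not $0$, so the correct identity is $\beta_3^4(h)=a\left(3^n-2^{n+1}+3\right)-2b$ (consistent with the value $2(a-b)$ recorded in the paper for $n=1$). The conclusion is unaffected: for $n\geq 2$ one has $3^n-2^{n+1}+3\geq 4$, hence $\beta_3^4(h)\geq 4a-2b>0$ because $a\geq 3b$; but as written your closed form, and the claim that its positivity needs no use of $a\geq 3b$, is incorrect and should be amended.
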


\begin{proof}
We apply Proposition \ref{l31}
and we use a similar argument as in the last part of proof of Theorem \ref{arit}.
\end{proof}

\begin{teor}\label{teo}
Let $n\geq 2$. With the above notations, we have that 
$$\qdepth(h)\leq \eq(h).$$ 
Moreover, the equality holds for $c(h)\leq 4$.
\end{teor}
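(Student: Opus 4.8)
\textbf{Proof plan for Theorem \ref{teo}.}

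The plan is to prove the upper bound $\qdepth(h)\leq \eq(h)$ by a case analysis on where $\alpha=\frac ab$ lies, matching the branches of the definition of $\eq(h)$, and then to prove the equality claim for $c(h)\leq 4$ by exhibiting nonnegativity of all the relevant $\beta^d_k(h)$. For the upper bound, the key observation is that $\eq(h)$ is already an upper bound coming from two sources: the trivial bound $\qdepth(h)\leq c(h)=\lfloor\alpha\rfloor+1$ from \eqref{ich} (which handles the first branch $\alpha\in(0,2^{n+1}-1)$ directly, since there $\eq(h)=c(h)$), and the quadratic bound from Lemma \ref{cheie}(3) together with the monotonicity of $g$. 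So in each of the remaining branches I would argue as follows: if $\alpha$ lies in the interval $(\alpha_{2^n-j},\alpha_{2^n-j+1}]$ (with the appropriate conventions at the endpoints), then by Lemma \ref{liema} and the fact that $g$ is strictly decreasing we have $g(\alpha)<g(\alpha_{2^n-j})=2^n+(\text{the value of }m\text{ for that }\lambda)$, hence $g(\alpha)$ is strictly below the candidate value $\eq(h)$; combined with the fact that $c(h)\geq \eq(h)$ in that range (which needs a short check: $\lfloor\alpha\rfloor+1\geq 2^n+m$ whenever $\alpha>\alpha_{2^n-1-m}\geq 2^n+m-1$, using the Lemma that bounds the $\lambda$'s), Lemma \ref{cheie}(3) applies with $d=\eq(h)$ and gives $\qdepth(h)<\eq(h)+1$, i.e. $\qdepth(h)\leq\eq(h)$. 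The top branch $\alpha\in[2^{n+1}-1,\alpha_1]$ is the special case where $\Delta\geq 0$ fails, but there the bound $\eq(h)=2^{n+1}$ follows because $c(h)=\lfloor\alpha\rfloor+1\leq 2^{n+1}$ (since $\alpha\leq\alpha_1<2^{n+1}-\frac12$ by the preceding Lemma), so again \eqref{ich} suffices.

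For the equality when $c(h)\leq 4$: if $c(h)\leq 3$ then $\alpha<3$, so by Proposition \ref{l31}(1) we have $\qdepth(h)=c(h)$, and in that regime $\eq(h)=c(h)$ as well (one checks $2^{n+1}-1\geq 3$ for $n\geq 2$, so $\alpha<3$ lands in the first branch), giving equality. If $c(h)=4$, then $3\leq\alpha<4$, and again this lies in the first branch (since $2^{n+1}-1\geq 7>4$ for $n\geq 2$), so $\eq(h)=c(h)=4$; by Lemma \ref{q4} we already have $\qdepth(h)\geq 4$, and the upper bound just proved gives $\qdepth(h)\leq 4$, hence equality.

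The main obstacle I expect is the bookkeeping that identifies, for each subinterval $(\alpha_{k},\alpha_{k+1}]$ of $\alpha$-values, exactly which value $2^n+m$ is the correct candidate for $\eq(h)$ and verifying the two inequalities $g(\alpha)<2^n+m$ and $c(h)\geq 2^n+m$ simultaneously on that interval — i.e., checking that the breakpoints $\lambda_j$ of Lemma \ref{liema} are consistent with the floor function breakpoints of $c(h)$, so that the quadratic obstruction $\beta_2^d(h)<0$ genuinely kicks in at $d=\eq(h)$ while $d\leq c(h)$ still holds (the hypothesis of Lemma \ref{cheie}). This requires the estimate $2^n+m-1<\lambda_{2^n-1-m}<$ (something like $2^n+m+\tfrac12$ or a similar bound), which should follow from the explicit formula for $\lambda_{2^n+1-m}$ in Lemma \ref{liema} by a direct but slightly tedious estimation; I would isolate this as a sub-lemma. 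Everything else — the monotonicity of $g$, the limits in \eqref{limite}, and the nonnegativity checks for the $c(h)=4$ case — is already available in the excerpt or is a one-line computation.
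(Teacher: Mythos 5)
Your plan is essentially the paper's own proof, which likewise just combines \eqref{ich} with Lemma \ref{cheie}, the breakpoints $\lambda_j$ and monotonicity of $g$ from Lemma \ref{liema}, and Lemma \ref{q4} (plus Proposition \ref{l31}) for the equality when $c(h)\leq 4$, and your case split between ``$d\leq c(h)$, use the quadratic obstruction'' and ``$d>c(h)$, use \eqref{ich}'' is exactly the intended bookkeeping. The only slip is an off-by-one in wording: on the branch $(\lambda_{2^n-j},\lambda_{2^n-j+1}]$ one gets $g(\alpha)<g(\lambda_{2^n-j})=\eq(h)+1$ (not $<\eq(h)$), so Lemma \ref{cheie}(3) must be applied with $d=\eq(h)+1$, which gives precisely the conclusion $\qdepth(h)\leq \eq(h)$ that you state.
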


\begin{proof}
It follows from \eqref{ich}, Lemma \ref{cheie}, Lemma \ref{liema} and Lemma \ref{q4}.
\end{proof}

\begin{teor}\label{pe2}
If $h(j)=\begin{cases} aj^2+b,& j\geq 0 \\ 0,& j<0 \end{cases}$, where $a,b$ are positive integers, then:
$$\qdepth(h)=\begin{cases} \lfloor \alpha \rfloor + 1,& \alpha \in (0,7) \\
8, & \alpha \in [7,\frac{22}{3}] \\ 
7, & \alpha \in (\frac{22}{3},8] \\ 6, & \alpha \in (8,11] \\ 5, & \alpha \in (11,\infty) \end{cases},\text{ where }\alpha=\frac{a}{b}.$$
\end{teor}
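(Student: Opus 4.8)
The plan is to specialize the general machinery of Section 3 to $n=2$, where $2^{n+1}=8$, and to match the cutoff values $\alpha_1,\lambda_0,\lambda_1,\dots,\lambda_{2^n-1}$ against the claimed thresholds. First I would record that here $h(0)=b$, $h(1)=a+b$, so $c(h)=\lfloor\alpha\rfloor+1$, and that by \eqref{ich} we always have $\qdepth(h)\le c(h)$. For $\alpha\in(0,7)$ we have $c(h)\le 7<8=2^{n+1}$; since $2^{n+1}-1=7$, the range $\alpha\in(0,7)$ is exactly the regime $\alpha\in(0,2^{n+1}-1)$ from the definition of $\eq(h)$, so Theorem \ref{teo} gives $\qdepth(h)=\eq(h)=c(h)=\lfloor\alpha\rfloor+1$, provided $c(h)\le 4$. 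For $4<c(h)\le 7$ (i.e. $\alpha\in[3,7)$) I still need the lower bound $\qdepth(h)\ge c(h)$, which is not yet covered by Theorem \ref{teo}; I would get it by the same kind of direct check as in Proposition \ref{l31} and Lemma \ref{q4}: verify $\beta_k^{c(h)}(h)\ge 0$ for $k=0,1$ (using $c(h)\le\alpha+1$), use Lemma \ref{cheie}(1)/(2) for $k=2$ after checking $\alpha\le\alpha_1$ or $c(h)\le g(\alpha)$ on this range, use Lemma \ref{bdd} for $k=c(h)$, and handle $k=3$ (and $k=4$ if $c(h)=6,7$) by an explicit alternating-sum estimate.

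Next I would pin down the numerical thresholds for $n=2$. From \eqref{alfa1}, $\alpha_1=4+\sqrt{14}-\tfrac12=\tfrac72+\sqrt{14}\approx 7.2417$, which lies in $(7,\tfrac{22}{3})$ since $\tfrac{22}{3}\approx 7.333$; this explains the split point $\tfrac{22}{3}$: I would compute from Lemma \ref{liema} with $n=2$ the solutions $\lambda_{4-m}$ of $g(x)=4+m$ for $m=2,3,4$, namely $\lambda_0,\lambda_1,\lambda_2$, and check that $\lambda_0=8$, $\lambda_1=11$, and $\lambda_2=\tfrac{22}{3}$ (a direct substitution into the displayed formula for $\lambda_{2^n+1-m}$). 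Then the definition of $\eq(h)$ reads: $\eq(h)=8$ for $\alpha\in[7,\alpha_1]$, $=7$ for $\alpha\in(\alpha_1,\lambda_2]=(\alpha_1,\tfrac{22}{3}]$ — wait, that ordering needs care, since $\lambda_2>\alpha_1$ but $\lambda_1,\lambda_0<\alpha_1$ is false — so I must re-index carefully, matching $(\alpha_{2^n-2},\alpha_{2^n-1}]$ etc. with the concrete $\lambda_i$'s, to land on the intervals $[7,\tfrac{22}{3}]\mapsto 8$, $(\tfrac{22}{3},8]\mapsto 7$, $(8,11]\mapsto 6$, $(11,\infty)\mapsto 5$. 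This bookkeeping, reconciling the abstract $\eq(h)$ with the explicit cutoffs, is where I expect the main friction: one has to be scrupulous about which $\lambda_i$ equals which numerical value and about open vs.\ closed endpoints (the closed endpoint at $\alpha_1$ and at $\tfrac{22}{3},8,11$ corresponds to $f(d)=0$ rather than $f(d)<0$).

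With the thresholds identified, the upper bound $\qdepth(h)\le\eq(h)$ is immediate from Theorem \ref{teo}. For the matching lower bound in each regime with $\eq(h)\ge 5$, I would argue as in the proof of Theorem \ref{arit}: fix $d=\eq(h)\in\{5,6,7,8\}$, check $\beta_0^d(h)=b>0$ and $\beta_1^d(h)=h(1)-\binom{d}{1}h(0)=a-(d-1)b\ge 0$ (which holds because in each regime $\alpha\ge d-1$: e.g. $\alpha\ge 7=8-1$ on $[7,\alpha_1]$, $\alpha>\tfrac{22}{3}>6$ forcing $d=7$ needs $\alpha\ge 6$, etc.), invoke Lemma \ref{cheie}(1) or (2) for $\beta_2^d(h)\ge 0$ (using that $d\le g(\alpha)$ on the relevant $\alpha$-interval, which is precisely how the $\lambda_i$ were defined), use Lemma \ref{bdd} for $\beta_d^d(h)>0$, and dispose of the remaining indices $k=3,\dots,d-1$ by the explicit formula $\beta_k^d(h)=\sum_{j=0}^k(-1)^{k-j}\binom{d-j}{k-j}(aj^2+b)$, which is a polynomial in $a,b$ with the $a$-coefficient a small signed integer and the $b$-coefficient likewise; I would show each such coefficient pair yields a nonnegative value on the relevant $\alpha$-range, possibly splitting into the $a$-part and $b$-part and grouping consecutive terms in pairs as in the proof of Proposition \ref{p18}. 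The computations are routine but numerous — four values of $d$, each with two or three intermediate $k$'s — so the real work is organizing them, not any single hard inequality.

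Finally I would double-check the one genuinely delicate transition: at $\alpha=\tfrac{22}{3}$ the claim asserts $\qdepth=8$, not $7$, so $g(\tfrac{22}{3})\ge 8$, i.e. $\tfrac{22}{3}\le\lambda_0$; but $\lambda_0=8>\tfrac{22}{3}$, consistent — yet $\tfrac{22}{3}\le\alpha_1\approx 7.24$ is false, so $d=8$ there cannot be justified via Lemma \ref{cheie}(1). Hence at $\alpha=\tfrac{22}{3}$ (and on all of $[7,\tfrac{22}{3}]$, where $\alpha$ may exceed $\alpha_1$) I must use Lemma \ref{cheie}(2), i.e. verify $8\le g(\alpha)$ for $\alpha\in[7,\tfrac{22}{3}]$; since $g$ is decreasing and $g(\tfrac{22}{3})$ should equal exactly $8$ (this is the defining property of the threshold, with $\lambda_0=\tfrac{22}{3}$ after correct re-indexing — I suspect the $8$ and $\tfrac{22}{3}$ labels in the statement pair the \emph{other} way than my first guess), the inequality holds with equality at the right endpoint, giving $\beta_2^8(h)=f(8)\ge 0$ there. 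Resolving this sign/endpoint alignment correctly, and then propagating it through the remaining $k$'s, is the crux; everything else is bookkeeping against Theorem \ref{teo}, Lemma \ref{cheie}, Lemma \ref{liema}, and Lemma \ref{bdd}.
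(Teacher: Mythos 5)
Your plan is essentially the paper's own proof: the upper bound comes from Theorem \ref{teo} via Lemma \ref{liema} (whose values are $\lambda_1=\tfrac{22}{3}$, $\lambda_2=8$, $\lambda_3=11$ --- your index labels are off but the numerical pairing you settle on, e.g.\ $g(\tfrac{22}{3})=8$, is the correct one), and the lower bound is obtained regime by regime for $d\in\{5,6,7,8\}$ exactly as in the paper's assertions (i)--(iv): $\beta_0^d,\beta_1^d\geq 0$ from $c(h)\geq d$, $\beta_2^d\geq 0$ from Lemma \ref{cheie}, $\beta_d^d>0$ from Lemma \ref{bdd}, and the intermediate $\beta_k^d$ checked as explicit linear expressions in $a,b$ on the given $\alpha$-range (the paper carries out these short computations, e.g.\ $\beta_4^8(h)=-4a+46b\geq 0$ since $\alpha\leq\tfrac{22}{3}$, which is precisely the kind of range-dependent check you anticipate). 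The proposal is correct and takes the same route; the only work you leave unexecuted is the finite list of routine $\beta_k^d$ evaluations.
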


\begin{proof}
From Lemma \ref{liema} it follows that $\lambda_1=\frac{22}{3}$, $\lambda_2=8$ and $\lambda_3=11$. Hence, the inequality $\leq$ follows from 
Theorem \ref{teo}. In order to complete the proof, it is enough to show the following assertions:
\begin{enumerate}
\item[(i)] If $\alpha \geq 4$ then $\qdepth(h)\geq 5$.
\item[(ii)] If $\alpha \in [5,11]$ then $\qdepth(h)\geq 6$.
\item[(iii)] If $\alpha \in [6,8]$ then $\qdepth(h)\geq 7$.
\item[(iv)] If $\alpha \in [7,\frac{22}{3}]$ then $\qdepth(h)\geq 8$.
\end{enumerate}

(i) We have that $\beta_0^5(h)=b>0$. Also, since $c(h)\geq 5$ it follows that $\beta_1^5(h)\geq 0$. 
    Lemma \ref{cheie} implies $\beta_2^5(h)\geq 0$ and Lemma \ref{bdd} implies $\beta_5^5(h)\geq 0$.
		On the other hand, by straightforward computations we have:
		$$\beta_3^5(h)=3a-6b>0\text{ and }\beta_3^5(h)=6a+5b>0.$$
(ii) As in the case (i), it is enough to show the following:
     $$\beta_3^6(h)=3a-13b > 3(a-5b) \geq 0,\; \beta_4^6(h)=3a+9b>0,\;\beta_5^6(h)=9a-3b>0.$$
(iii) By straightforward computations, we get
      $$\beta_3^7(h)=4a-24b \geq 0,\;\beta_4^7(h)=22b>0,\;\beta_5^7(h)=5a-12b>0\text{ and }\beta_6^7(h)=12a+4b>0.$$
			The conclusion follows similar to the case (i).\\
(iv) By straightforward computations, we get
      \begin{align*}
			& \beta_3^8(h)=6a-40b \geq 0,\;\beta_4^8(h)=-4a+46b = 4b(\frac{23}{2}-\alpha)>0\text{ since }\alpha\leq \frac{22}{3},\\
			& \beta_5^8(h)=6a-26b >0,\;\beta_6^8(h)=6a+15b>0\text{ and }\beta_7^8(h)=16a-4b>0.
			\end{align*}
			The conclusion follows similar to the case (i).
\end{proof}

\begin{exm}\label{sarpe}\rm
It is natural to ask if the upper bound given in Theorem \ref{teo} holds for $n\geq 3$. Unfortunately, this is not the case 
even for $n=3$. Let $h(j)=\begin{cases} 15j^3 + 1, & j\geq 0 \\ 0, & j<0 \end{cases}$. Since $\alpha=15=2^{4}-1$, it follows that $\eq(h)=16$.
On the other hand, we have that $$\beta_3^16(j)=-\binom{16}{3}+\binom{15}{2}\cdot 16 - \binom{14}{1}\cdot 121 + \binom{13}{0}\cdot 405 = -168<0,$$
and, therefore, $\qdepth(h)<16$.
\end{exm}

\newpage
\section{An algorithm to compute the arithmetic Hilbert depth}

\begin{algorithm}
\caption{Compute the Hilbert depth of $h=(h(0),h(1),\ldots,h(s))$, $h(j)>0$ for $0\leq j\leq s$}\label{alg:cap}
\begin{algorithmic}
\State $d \gets \lfloor h(1)/h(0) \rfloor$
\If{$d>s$} \State $d=s-1$ \EndIf
\State $good \gets\textbf{false}$
\While{$(!good\text{ and }d>0)$}
\State $good \gets\textbf{true}$
\For{$k:=1\textbf{ to }d$}
  \State $\beta \gets 0$
	\For{$j:=1\textbf{ to }k$}
	   \State $\beta \gets \beta+(-1)^{k-j}\cdot Binomial(d-j,k-j)$
	\EndFor
	\If{$\beta<0$}
	   \State $good\gets\textbf{false}$ \\
		 \hspace{52 pt}\textbf{break}
	\EndIf	
\EndFor
\If{$!good$} 
\State $d=d-1$
\EndIf
\EndWhile \\
\Return $d$
\end{algorithmic}
\end{algorithm}

\subsection*{Aknowledgments} 

The second author, Mircea Cimpoea\c s, was supported by a grant of the Ministry of Research, Innovation and Digitization, CNCS - UEFISCDI, 
project number PN-III-P1-1.1-TE-2021-1633, within PNCDI III.





\end{document}